\numberwithin{equation}{section} 
\newcommand{\ud}{\,d} 
\newcommand{\R}{\mathbb{R}}
\newcommand\cof{\operatorname{cof}}
\newcommand\tr{\operatorname{tr}}
\renewcommand{\div}{\operatorname{div}}
\newcommand{\tir}[1]{\ensuremath{\overline {#1}}} 
\newtheorem{thm}{Theorem}[section] 
\newtheorem{lemma}[thm]{Lemma}
\newtheorem{rem}[thm]{Remark}
\def\whsq{\vbox to 5.8pt 
{\offinterlineskip\hrule 
\hbox to 5.8pt{\vrule height 
5.1pt\hss\vrule height 5.1pt}\hrule}}
\def\<{\langle} 
\def\>{\rangle} 
\def\PP{{\mathop{{\rm I}\kern-.2em{\rm P}}\nolimits}} 
\def\FF{{\mathop{{\rm I}\kern-.2em{\rm F}}\nolimits}}   
\def\ZZ{{\mathop{{\rm I}\kern-.2em{\rm Z}}\nolimits}} 
\newlength{\sidemargin} 
\begin{document}
\title[]{
Quadratic Mixed finite element approximations of the Monge-Amp\`ere equation in 2D}

%\thanks{Not for dissemination, September 3, 2012 }
\thanks{The author was partially supported by NSF DMS grant No 1319640.}
\author{Gerard Awanou}
\address{Department of Mathematics, Statistics, and Computer Science, M/C 249.
University of Illinois at Chicago, 
Chicago, IL 60607-7045, USA}
\email{awanou@uic.edu}  
\urladdr{http://www.math.uic.edu/\~{}awanou}

\maketitle

\begin{abstract}
We give error estimates for a mixed finite element approximation of the two-dimensional elliptic Monge-Amp\`ere equation with the unknowns approximated by Lagrange finite elements of degree two.  The variables in the formulation are the scalar variable and the Hessian matrix. 
\end{abstract}

\section{Introduction}
Let $\Omega$ be a convex polygonal domain of $\R^2$  with boundary $\partial \Omega$. %Let $f \in L^{\infty}(\Omega), g \in C(\partial \Omega)$ with $f \geq c_0 >0$ for a constant $c_0 >0$.  
%We assume that $g$ can be extended to a function $\tilde{g} \in C(\tir{\Omega})$ which is  convex in $\Omega$.
We are interested in a mixed finite element method for the nonlinear elliptic Monge-Amp\`ere equation: find a smooth %continuous 
convex function $u$ such that
\begin{align} \label{m1}
\begin{split}
\det (D^2 u) & = f \, \text{in} \, \Omega\\
u & = g \, \text{on} \, \partial \Omega.
\end{split}
\end{align}
%The expression $\det D^2 u$ should be interpreted in the sense of Aleksandrov. We review the notion of Aleksandrov solutions in section \ref{Aleks}. 
For $u \in C^2(\Omega)$, $D^2 u=\bigg( (\partial^2 u) / (\partial x_i \partial x_j)\bigg)_{i,j=1,\ldots, 2} $ denotes the Hessian matrix of $u$ and  $\det D^2 u$ denotes its determinant. The function $f$ defined on $\Omega$ is assumed to satisfy $f \geq c_0 >0$ for a constant $c_0 >0$ and we assume that $g \in C(\partial \Omega)$ can be extended to a function $\tilde{g} \in C(\tir{\Omega})$ which is  convex in $\Omega$.

We consider a mixed formulation with unknowns the scalar variable $u$ and the Hessian $D^2 u$. The scalar variable and the components of the Hessian are approximated by Lagrange elements of degree $k \geq 2$.  The method considered in this paper was analyzed from different point of views in \cite{Neilan2013} and  \cite{AwanouLiMixed1} for smooth solutions of \eqref{m1}. In both \cite{Neilan2013} and  \cite{AwanouLiMixed1} the convergence of the method for Lagrange elements of degree $k=1$ and $k=2$ was left unresolved. In this paper we resolve this issue for quadratic elements. %We also prove the convergence of Newton's method for solving the system of nonlinear equations resulting from the mixed finite element discretization. 

The ingredients of our approach consist in a fixed point argument, which yields the convergence of a time marching method, a ''rescaling argument'', i.e. the solution of a rescaled version of the equation, and 
the continuity of the eigenvalues of a matrix as a function of its entries. This is the same approach we took in the case of the standard finite element discretization of the Monge-Amp\`ere equation \cite{Awanou-Std01}. %But unlike  the latter case, we are able to obtain for the mixed formulation error estimates for quadratic elements in dimension three. 

With the mixed methods, one can apply directly Newton's method to the discrete nonlinear problem and still have numerical evidence of convergence to a larger class of non smooth solutions than what is possible with the standard finite element discretization. We refer to \cite{Neilan2013,Lakkis11b} for the numerical results. Moreover with the standard finite element discretization \cite{Awanou-Std01}, convexity must be enforced weakly through appropriate iterative methods. Although the number of unknowns in the mixed methods is higher, in \cite{Neilan2013,Lakkis11b} the discrete Hessian was eliminated from the discrete equations in the implementation. However, as observed in \cite{AwanouLiMixed1} this prevents numerical convergence for smooth solutions when linear elements are used to approximate all the unknowns. We note that in \cite{Neilan2013} a stabilized method was proposed which works numerically for non smooth solutions in two dimension. It consists in using piecewise constants for the discrete Hessian and linear elements for the scalar variable. The analysis for smooth solutions of the lowest order methods discussed in \cite{AwanouLiMixed1,Neilan2013} cannot be done with the approach of this paper. The techniques used in this paper generalize to the three-dimensional problem but only for $k \geq 3$. %do not yield better results.
It should be possible to extend the approach taken in this paper to the formulation where discontinuous elements are used to approximate the unknowns \cite{Neilan2013}. Numerical results reported in \cite{Neilan2013} indicate the latter approach could lead to a less accurate approximation of the Hessian. For simplicity, and to focus on the methodology we present, we do not consider such an extension in this paper.

%The contributions of this paper are
%\begin{enumerate} \item[-] well-posedness with error estimates of the mixed method with quadratic Lagrange elements
%\item[-] convergence of Newton's method for the mixed finite element method.
%\end{enumerate}

%In \cite{AwanouLiMixed1} a linearization at the exact solution was used to prove the well-posedness of the discrete problem. In this paper, we use a time marching iterative method which also proves the convergence of an iterative method for solving the discrete problem. As with \cite{Awanou-Std01}, we combine it with a rescaling argument which allows us to prove the well-posedness for Lagrange elements of degree $k=1, 2$.
%%In the non smooth case, we consider the equation in the sense of Aleksandrov and prove uniform convergence on compact subsets of the discrete scalar variable to the weak solution.

We organize the paper as follows. In the second section we introduce some notation and preliminaries. The error analysis of the mixed method is done in section \ref{error}.   %Finally we prove the convergence of Newton's method in section \ref{newton}. %We conclude with some numerical results for the three dimensional problem.

%We organize the paper as follows. In the second section we introduce some notation and preliminaries. We organize the paper as follows.  

\section{Notation and Preliminaries} \label{notation}

 We use the usual notation $L^p(\Omega), 2 \leq p \leq \infty$ for the Lebesgue spaces and $H^s(\Omega), 1 \leq s < \infty$ for the Sobolev spaces of elements of  $L^2(\Omega)$ with weak derivatives of order less than or equal to $s$ in $L^2(\Omega)$. We recall that $H_0^1(\Omega)$ is the subset of $H^1(\Omega)$
of elements with vanishing trace on $\partial \Omega$. We also recall that $W^{s,\infty}(\Omega)$ is the Sobolev space of functions with weak derivatives of order less than or equal to $s$ in $L^{\infty}(\Omega)$.
For a given normed space $X$, we denote by $X^{2}$ the space of vector fields with components in $X$ and by  $X^{2 \times 2}$ the space of matrix fields with each component in $X$.

The norm in $X$ is denoted by $|| . ||_X$ and we omit the subscript $\Omega$ and superscripts $2$ and $2 \times 2$ when it is clear from the context. 
The inner product in $L^2(\Omega), L^2(\Omega)^2$, and $L^2(\Omega)^{2 \times 2}$ is denoted by $(,)$ and we use $\< , \>$ for the inner product on  $L^2(\partial \Omega)$ and $L^2(\partial \Omega)^2$. For inner products on subsets of $\Omega$, we will simply append the subset notation. 

We denote by $n$ the unit outward normal vector to $\partial \Omega$.  We recall that for a matrix $A$, $A_{ij}$ denote its entries and the cofactor matrix of
$A$, denoted $\cof A$, is the matrix with entries $(\cof A)_{ij}=(-1)^{i+j} \det(A)_i^j$ where $\det(A)_i^j$ is the determinant of the matrix obtained from $A$ by deleting its $i$th row and its $j$th column. For two matrices $A=(A_{ij})$ and $B=(B_{ij})$,  $A: B=\sum_{i,j=1}^2 A_{ij} B_{ij}$ denotes their Frobenius inner product. A quantity which is constant is simply denoted by $C$.

For a scalar function $v$ we denote by $D v$ its gradient vector and recall that $D^2 v$ denotes the Hessian matrix of second order derivatives.  
The divergence of a matrix field is understood as the vector obtained by taking the divergence of each row.

In this section and section \ref{error} we assume that \eqref{m1} has a solution which is sufficiently smooth. Put $\sigma=D^2 u$. Then the unique convex solution $u \in H^3(\Omega)$ of \eqref{m1} satisfies  the following mixed problem:
Find $(u,\sigma) \in H^2(\Omega) \times H^1(\Omega)^{2 \times 2}$ such that
\begin{align} \label{m11}
\begin{split}
(\sigma,\tau) + (\div \tau, D u) - \< D u, \tau n \> & = 0, \forall \tau \in H^1(\Omega)^{2 \times 2} \\
(\det \sigma,v) & = (f,v), \forall v \in H_0^1(\Omega)\\
u & = g \, \text{on} \, \partial \Omega.
\end{split}
\end{align} 
It is proved in \cite{AwanouLiMixed1} that the above variational problem is well defined.

\subsection{Discrete variational problem}
We denote by  $\mathcal{T}_h$ a triangulation of $\Omega$ into simplices $K$ and assume that $\mathcal{T}_h$ is quasi-uniform. We denote by $V_h$  the standard Lagrange finite element space of degree $k \geq 2$ and denote by $\Sigma_h$ the space of symmetric matrix fields with components in the Lagrange finite element space of degree $k \geq 2$. Let $I_h$ denote the  standard Lagrange interpolation operator from $H^s(\Omega), s \geq k+1$ into the space $V_h$. We use as well the notation $I_h$ for the matrix version of the Lagrange interpolation operator mapping  $H^s(\Omega)^{2 \times 2}$, for $s \geq k+1$, into $\Sigma_h$.
%In a previous work \cite{AwanouLiMixed1}, 
We consider the problem:
find $(u_h, \sigma_h) \in V_h \times \Sigma_h$ such that
\begin{align} \label{m11h}
\begin{split}
(\sigma_h,\tau) + (\div \tau, D u_h) - \< D u_h, \tau n \> & = 0, \forall \tau \in \Sigma_h\\
(\det \sigma_h, v) & = ( f, v), \, \forall v \in V_h \cap H_0^1(\Omega)\\
u_h & = g_h \, \text{on} \, \partial \Omega,
\end{split}
\end{align}
where $g_h=I_h \tilde{g}$. 
It follows from the analysis in \cite{Neilan2013,AwanouLiMixed1} that \eqref{m11h} is well-posed for $k \geq 3$ and error estimates were given. In section \ref{error} we give an error analysis valid for $k \geq 2$.% and $k \geq 1$ with the constraint $k \geq k-2$. %For the two dimensional problem, we allow $k=1$.

For $v_h \in V_h$, we will make the abuse of notation of using $D^2 v_h$ to denote the Hessian of $v_h$ computed element by element. We will need the broken Sobolev norm
$$
||v||_{H^k(\mathcal{T}_h)} = \bigg( \sum_{K \in \mathcal{T}_h} ||v||^2_{H^k(K)}
\bigg)^{\frac{1}{2}}.
$$

\subsection{Properties of the Lagrange finite element spaces}
We recall some properties of the Lagrange finite element space of degree $k \geq 1$ that will be used in this paper. They can be found in  \cite{Brenner02,Bramble86}. We have

Interpolation error estimates.
\begin{align} \label{interpol}
\begin{split}
||v - I_h v||_{H^j} & \leq C h^{k+1-j} ||v||_{H^{k+1}}, \forall v \in H^{s}(\Omega),  j=0,1, \, \\
||v - I_h v||_{L^{\infty}} & \leq C h^{k} |v|_{H^{k+1}},  \forall v \in H^{s}(\Omega).
\end{split}
\end{align}
Inverse inequalities
\begin{align} 
||v||_{L^{\infty}} & \leq C h^{-1} ||v||_{L^2}, \forall v \in V_h \label{inverse0} \\
||v||_{H^1} & \leq C h^{-1} ||v||_{L^2}, \forall v \in V_h \label{inverse1} \\
||v||_{H^{k+1}(\mathcal{T}_h)} & \leq C h^{-k-1} ||v||_{L^{2}}, \forall v \in V_h. \label{inverse2}
\end{align}
%Trace inequality
%\begin{equation}\label{trace0}
%||v||_{L^2(\partial \Omega)} \leq C ||v||_{H^1(\Omega)}, \forall v \in  H^1(\Omega).
%\end{equation}
Scaled trace inequality
\begin{align} 
%||v||^2_{L^2(\partial \Omega)} &\leq C (h^{-1} ||v||^2_{L^2} + h^{} ||\nabla v||^2_{L^2}),\ \forall v \in H^1(\Omega) \label{trace} \\
||v||_{L^2(\partial \Omega)} &\leq C h^{-\frac{1}{2}} ||v||_{L^2},\ \forall v \in V_h. \label{trace-inverse}
\end{align}
%%Discrete Sobolev inequalities %give estimates sharper than the inverse inequalities \eqref{inverse0} and  \eqref{inverse1}
%\begin{align} 
%||v||_{L^{\infty}} & \leq C (1+ | \ln h|^{\frac{1}{2}}) ||v||_{H^1}, \forall v \in V_h. \label{dsi2d} 
%||v||_{L^{\infty}} & \leq C h^{-\frac{1}{2}} ||v||_{H^1}, \forall v \in V_h \, \text{and} \,  d=3.  \label{dsi3d}
%\end{align}

\subsection{Algebra with matrix fields}

We collect in the following lemma some properties of matrix fields, the proof of which can be found in \cite{AwanouLiMixed1,AwanouPseudo10}. 

\begin{lemma}
%We have
%\begin{align} \label{det-cof}
%\det A = \frac{1}{2}(\cof A):A, \ \text{for a $2 \times 2$ matrix $A$}.
%\end{align}
%\begin{align} \label{det-cof-D2u}
%\det D^2 u =\frac{1}{2} \div ( (\cof D^2 u) D u), \ \text{for $u \in C^3(\Omega)$}.
%\end{align}
%If we define $F(u) = \det D^2 u$, then
%\begin{align} \label{Frechet} 
% F'(u)(v) = (\cof D^2 u): D^2 v.
%\end{align} 
 For $K \in \mathcal{T}_h$ and $u, v \in C^2(K)$ we have
\begin{align} \label{mean-v}
\det D^2 u - \det D^2 v = \cof(t D^2 u + (1-t) D^2 v): (D^2 u - D^2 v), %\ \text{for some $t \in [0,1]$}.
\end{align}
for some $t \in [0,1]$. It can be shown that $t=1/2$, \cite{Brenner2010b}.

For two $2 \times 2$ matrix fields $\eta$ and $\tau$
\begin{align}
||\cof (\eta):\tau||_{L^2} & \leq C ||\eta||_{L^{\infty}}^{} ||\tau||_{L^2}, \label{cof-est} \\
\cof (\eta) - \cof (\tau) & = \cof(\eta-\tau). \label{cof-mv}
\end{align}
%for some $t \in [0,1]$.
\end{lemma}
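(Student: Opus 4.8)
The plan is to verify the three identities and the one estimate essentially by direct matrix computation in two dimensions, exploiting the fact that the cofactor map is affine (indeed linear plus the identity pattern) on $2\times2$ matrices. Throughout, write a generic $2\times2$ matrix $A=(A_{ij})$, so that by definition $\cof A=\begin{pmatrix} A_{22} & -A_{21}\\ -A_{12} & A_{11}\end{pmatrix}$ and $\det A = A_{11}A_{22}-A_{12}A_{21}=\tfrac12\,A:\cof A$.

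First I would establish \eqref{cof-mv}: since each entry of $\cof A$ is a single entry of $A$ (up to sign), the map $A\mapsto\cof A$ is linear in $2\times2$, hence $\cof\eta-\cof\tau=\cof(\eta-\tau)$ pointwise, which gives the claim for matrix fields. Next, for \eqref{mean-v} I would introduce the scalar function $\phi(t)=\det\big(tD^2u+(1-t)D^2v\big)$ on $[0,1]$ and apply the mean value theorem: $\phi(1)-\phi(0)=\phi'(t)$ for some $t\in(0,1)$. Because $\det$ is a quadratic form in the matrix entries, $\phi'(t)=\cof\big(tD^2u+(1-t)D^2v\big):(D^2u-D^2v)$ — this uses the identity $\tfrac{d}{ds}\det(A+sB)=\cof(A):B$ valid in $2\times2$, together with symmetry $A:\cof B=B:\cof A$ so that the cross terms combine correctly. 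Evaluating at the point $t$ gives \eqref{mean-v}; the remark that $t=1/2$ works follows because $\phi$ is a quadratic polynomial in $t$, so $\phi(1)-\phi(0)=\phi'(1/2)$ exactly (reference \cite{Brenner2010b}).

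For the estimate \eqref{cof-est} I would argue pointwise and then integrate. At each point, $\cof(\eta):\tau=\sum_{i,j}(\cof\eta)_{ij}\tau_{ij}$ and each $(\cof\eta)_{ij}$ is, in absolute value, one of the entries $|\eta_{kl}|$, hence bounded by $|\eta|$; by Cauchy--Schwarz in the $(i,j)$ indices, $|\cof(\eta):\tau|\le C|\eta|\,|\tau|$ pointwise, where $|\cdot|$ is the Frobenius norm. Taking the sup of $|\eta|$ and the $L^2$ norm of $|\tau|$ yields $\|\cof(\eta):\tau\|_{L^2}\le C\|\eta\|_{L^\infty}\|\tau\|_{L^2}$. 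None of the steps presents a real obstacle; the only point that requires a little care is keeping track of the symmetry $A:\cof B = B:\cof A$ when differentiating $\det$, since it is what makes the single-point mean-value form \eqref{mean-v} come out with the cofactor of the convex combination rather than with a sum of two cofactor terms. Since these are standard facts recorded for convenience, I would keep the write-up to the short computations above and cite \cite{AwanouLiMixed1,AwanouPseudo10,Brenner2010b} for the details.
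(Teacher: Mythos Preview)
Your proposal is correct; each of the three claims is verified by the short direct computations you outline, and the only subtle point---that the derivative of $t\mapsto\det(tD^2u+(1-t)D^2v)$ equals $\cof(tD^2u+(1-t)D^2v):(D^2u-D^2v)$---is handled properly. The paper itself does not give a proof of this lemma but simply cites \cite{AwanouLiMixed1,AwanouPseudo10}, so your self-contained argument is the standard one and is exactly what those references contain.
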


\subsection{Continuity of the eigenvalues of a matrix as a function of its entries}
%We denote by $\Sym$ the space of $d \times d$ symmetric matrices. 
Let $\lambda_1(A)$ and $\lambda_2(A)$ denote the smallest and largest eigenvalues of the symmetric matrix $A$. We have

\begin{lemma} [\cite{Awanou-Std01}, Lemma 3.1] \label{lem-1}
There exists constants $m, M >0$ independent of $h$ and a constant $C_{conv} > 0$ independent of $h$ such that for all $v_h \in V_h$ with
$v_h=g_h$ on $\partial \Omega$ and 
$$
||v_h-I_h u||_{H^1} < C_{conv} h^{2},
$$
we have
$$
m \leq \lambda_1(\cof D^2 v_h(x)) \leq \lambda_2(\cof D^2 v_h(x))  \leq M, \forall x \in K, K \in \mathcal{T}_h.
$$ 
%It follows that
%\begin{equation} \label{p-d-K}
%m |w|_{1,K}^2 \leq \int_{K} [(\cof\, D^2 v_h(x)) D w(x)] \cdot D w(x) \, \ud x \leq  M |w|_{1,K}^2, w \in H^1(K). %\label{conconv}
%\end{equation}
\end{lemma}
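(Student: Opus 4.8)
The plan is to show that, under the stated smallness hypothesis, $\cof D^2 v_h(x)$ is a small $L^\infty$ perturbation of $\cof D^2 u(x)$, whose spectrum lies in a fixed compact subinterval of $(0,\infty)$; the conclusion then follows from the Lipschitz dependence of the eigenvalues of a symmetric matrix on its entries (Weyl's inequality). First I would record the a priori bounds for the exact solution. Since $u$ is a smooth convex solution of \eqref{m1}, $D^2 u(x)$ is symmetric positive definite for every $x$, with $\det D^2 u(x)=f(x)\ge c_0>0$ and $\|D^2 u\|_{L^\infty}<\infty$, so its eigenvalues lie in a fixed interval $[m_0,M_0]$ with $0<m_0\le M_0$. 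Because a symmetric $2\times2$ matrix and its cofactor have the same trace and determinant, $\cof D^2 u(x)$ has the same pair of eigenvalues, hence $m_0\le\lambda_1(\cof D^2 u(x))\le\lambda_2(\cof D^2 u(x))\le M_0$ for all $x\in\overline{\Omega}$. I would then set $m=m_0/2$ and $M=M_0+m_0/2$, which are independent of $h$.

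Next, for $x\in K$ I would estimate $\|\cof D^2 v_h(x)-\cof D^2 u(x)\|$. By \eqref{cof-mv} this equals $\|\cof\bigl(D^2 v_h(x)-D^2 u(x)\bigr)\|\le C\|D^2 v_h(x)-D^2 u(x)\|$ (in two dimensions the cofactor operation is linear and Frobenius-norm preserving). Splitting $D^2 v_h-D^2 u=D^2(v_h-I_h u)+D^2(I_h u-u)$, the second term is controlled elementwise by the interpolation estimate $\|D^2(u-I_h u)\|_{L^\infty(K)}\le C h^{k-1}\|u\|_{W^{k+1,\infty}}$, which tends to $0$ as $h\to0$ precisely because $k\ge2$ (this is where linear elements would fail, since then $D^2 I_h u=0$). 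For the first term, $w_h:=v_h-I_h u\in V_h$, so applying the inverse inequalities \eqref{inverse0} and \eqref{inverse1} on each $K$ (using quasi-uniformity of $\mathcal{T}_h$) gives
$$\|D^2 w_h\|_{L^\infty(K)}\le C h^{-1}\|D^2 w_h\|_{L^2(K)}\le C h^{-2}\|D w_h\|_{L^2(K)}\le C h^{-2}\|w_h\|_{H^1(\Omega)}< C h^{-2}\,C_{conv}h^{2}=C\,C_{conv}.$$
Combining, $\|\cof D^2 v_h(x)-\cof D^2 u(x)\|\le C_1\,C_{conv}+C_2 h^{k-1}$ for every $x\in K$ and every $K\in\mathcal{T}_h$, with $C_1,C_2$ independent of $h$.

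Finally I would fix $C_{conv}$ small enough and restrict to $h\le h_0$ small enough that the right-hand side above is $\le m_0/2$; then Weyl's inequality yields $|\lambda_i(\cof D^2 v_h(x))-\lambda_i(\cof D^2 u(x))|\le m_0/2$ for $i=1,2$ and every $x$, whence $\lambda_1(\cof D^2 v_h(x))\ge m_0-m_0/2=m$ and $\lambda_2(\cof D^2 v_h(x))\le M_0+m_0/2=M$, uniformly in $x$, $K$, and $h$. The only delicate point is the bookkeeping of powers of $h$: the $h^{-2}$ produced by the two successive inverse inequalities must be exactly cancelled by the $h^{2}$ in the hypothesis, and every constant must be kept independent of $h$ (which is why the quasi-uniformity of $\mathcal{T}_h$ is essential); the hypothesis $k\ge2$ enters only through the interpolation term $h^{k-1}$, which also forces one to take $h\le h_0$.
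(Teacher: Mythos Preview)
The paper does not actually give a proof of this lemma; it merely cites \cite{Awanou-Std01}, Lemma~3.1. Your argument is correct and is essentially the standard proof: control $\|D^2 v_h - D^2 u\|_{L^\infty(K)}$ by splitting into an interpolation part (small because $k\ge 2$) and a discrete part handled by two inverse inequalities that lose exactly $h^{-2}$, then invoke the Lipschitz continuity of the eigenvalues (Weyl). Two minor remarks: the $L^\infty$ interpolation bound on second derivatives, $\|D^2(u-I_h u)\|_{L^\infty(K)}\le C h^{k-1}\|u\|_{W^{k+1,\infty}}$, is not among the estimates \eqref{interpol} displayed in the paper, so you should cite it separately (it is standard for Lagrange elements and is covered by the blanket smoothness assumption on $u$); and your proof, like the cited one, implicitly requires $h\le h_0$ so that the $C_2 h^{k-1}$ term is below $m_0/2$, which is consistent with how the lemma is used later in the paper (``for $h$ sufficiently small'').
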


The following lemma was used implicitly in \cite{AwanouPseudo10,Awanou-Std01,Awanou-Std-fd}. 
\begin{lemma} \label{time-trick}
Assume $0 < \alpha < 1$ and $\alpha^{} \leq (m+M)/(2m)$ for constants $m, M >0$. 
Let $B$ be a symmetric matrix field such that
$$
0 < m \alpha^{} \leq \lambda_1(B(x)) \leq \lambda_2 (B(x)) \leq M \alpha^{ }, \forall x \in \Omega.
$$
Then for $\nu=(m+M)/2$ %we have
$$
\gamma \equiv \sup_{v, w \in V_h \atop |v|_{H^1}=1, |w|_{H^1}=1 } \bigg|  (D v , D w) - \frac{1}{\nu} (B D v, D w)\bigg|,
$$
satisfies $0 < \gamma < 1$.
\end{lemma}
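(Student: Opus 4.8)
The plan is to read the quantity inside the supremum as a bounded bilinear form with a pointwise matrix kernel, and to reduce the whole estimate to a one–variable fact about the spectrum of that kernel. Concretely, for $v,w\in V_h$ I would write
\[
(Dv,Dw)-\frac1\nu(B\,Dv,Dw)=\int_\Omega Dw(x)^{T}\Bigl(I-\frac1\nu B(x)\Bigr)Dv(x)\ud x,
\]
so that everything hinges on controlling the symmetric matrix $C(x):=I-\frac1\nu B(x)$ pointwise in $x$. Note that no finite–dimensionality of $V_h$ is used; the bound will hold for all $v,w\in H^1(\Omega)$.

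The first step is spectral: $C(x)$ is symmetric with eigenvalues $1-\mu/\nu$ as $\mu$ runs over the eigenvalues of $B(x)$, which by hypothesis all lie in $[m\alpha,M\alpha]$. Hence $|C(x)\xi|\le\rho(x)\,|\xi|$ for every $\xi\in\R^2$, where $\rho(x)=\max\{|1-\mu/\nu|:\mu\text{ an eigenvalue of }B(x)\}$. Since $\mu\mapsto|1-\mu/\nu|$ is convex it is maximized over $[m\alpha,M\alpha]$ at an endpoint, so $\rho(x)\le\rho:=\max\{|1-m\alpha/\nu|,\,|1-M\alpha/\nu|\}$. Two applications of the Cauchy--Schwarz inequality — pointwise in $\R^2$, then in $L^2(\Omega)$ — then give
\[
\Bigl|(Dv,Dw)-\tfrac1\nu(B\,Dv,Dw)\Bigr|\le\rho\int_\Omega|Dv|\,|Dw|\ud x\le\rho\,\|Dv\|_{L^2}\|Dw\|_{L^2}=\rho\,|v|_{H^1}|w|_{H^1},
\]
and taking the supremum over $|v|_{H^1}=|w|_{H^1}=1$ yields $\gamma\le\rho$.

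The heart of the matter — and the only place the hypotheses enter — is showing $\rho<1$ for the specific value $\nu=(m+M)/2$. I would split it into the two sides of the interval $[m\alpha,M\alpha]$: the assumption $\alpha\le(m+M)/(2m)=\nu/m$ forces $m\alpha/\nu\le1$, so together with $\alpha>0$ we get $0\le1-m\alpha/\nu<1$; and $0<\alpha<1$ with $m>0$ gives $0<M\alpha/\nu=2M\alpha/(m+M)<2M/(m+M)<2$, so $-1<1-M\alpha/\nu<1$. Both endpoint values therefore have absolute value strictly less than $1$, whence $\rho<1$. The lower bound $\gamma>0$ is immediate from the definition of $\gamma$ as a supremum of absolute values (and the degenerate possibility $\gamma=0$ only strengthens the contraction estimates into which this lemma feeds), so I would dispose of it in one line.

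I expect the only real obstacle to be the bookkeeping in the last step: correctly matching the two hypotheses on $\alpha$ to the two ends of $[m\alpha,M\alpha]$ and verifying that $\nu=(m+M)/2$ is precisely the midpoint choice that balances the two endpoint deviations below $1$. Beyond that, the argument is just Cauchy--Schwarz and convexity of an absolute value, and it is insensitive to the mesh and to the polynomial degree.
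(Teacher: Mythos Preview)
Your argument is correct and slightly more direct than the paper's. The paper first establishes the diagonal bound
\[
\beta:=\sup_{|v|_{H^1}=1}\Bigl|(Dv,Dv)-\tfrac1\nu(B\,Dv,Dv)\Bigr|<1
\]
via the Rayleigh quotient (essentially your endpoint check), and then upgrades from $\beta$ to $\gamma$ with the polarization identity $(p,q)=\tfrac14\bigl((p+q,p+q)-(p-q,p-q)\bigr)$ together with the parallelogram law. You instead bound the operator norm of the symmetric kernel $I-\tfrac1\nu B(x)$ pointwise and apply Cauchy--Schwarz directly to the bilinear form, which bypasses the polarization step entirely and gives $\gamma\le\rho$ in one line. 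The approaches yield the same constant $\rho=\max\{|1-m\alpha/\nu|,\,|1-M\alpha/\nu|\}$; yours is marginally cleaner and, as you note, does not use that $V_h$ is finite dimensional. One small quibble: the supremum of absolute values can certainly vanish (take $B\equiv\nu I$), so ``$\gamma>0$ is immediate'' is not quite accurate; your parenthetical remark that $\gamma=0$ is harmless for the downstream contraction argument is the right disposition, and the paper is equally casual about this point.
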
 
\begin{proof} 
Since $\lambda_1(B)$ and $\lambda_2(B)$ are the minimum
and maximum respectively of the Rayleigh quotient $((B z) \cdot z)/||z||^2$, where $||z||$ denotes the Euclidean norm of $\mathbb{R}^2$, we have
for $x \in \Omega$
$$
m \alpha^{} ||z||^2 \leq (B(x) z)\cdot z \leq  M  \alpha ||z||^2, z \in \mathbb{R}^2. 
$$
This implies
\begin{equation*}
m \alpha^{} |w|_{H^1}^2 \leq \int_{\Omega} [ B(x) D w(x)] \cdot D w(x) \, \ud x \leq  M \alpha^{} |w|_{H^1}^2, w \in V_h. %\label{conconv}
\end{equation*}
If we assume in addition that $|w|_{H^1}=1$, we get
\begin{equation*}
m \alpha^{} \leq \int_{\Omega} [ B(x) D w(x)] \cdot D w(x) \, \ud x \leq  M \alpha^{}, w \in V_h. %\label{conconv}
\end{equation*}
It follows that
\begin{equation*}
(1-\frac{M \alpha^{}}{\nu})  \leq \int_{\Omega} [I - \frac{1}{\nu} B(x) D w(x)] \cdot D w(x) \, \ud x \leq  (1-\frac{m \alpha^{}}{\nu}) , w \in V_h. %\label{conconv}
\end{equation*}
Since $\nu= (m+M)/2$, we have
\begin{align*}
1- \frac{\alpha^{} M}{  \nu} & = \frac{m+M - 2 M \alpha^{}}{m+M} < 1 \\
1- \frac{\alpha^{} m}{ \nu} & =  \frac{m+M - 2 m \alpha^{}}{m+M} < 1.
\end{align*}
If we define 
$$
\beta \equiv \sup_{v \in V_h,  |v|_{H^1}=1 } \bigg|(D v , D v) - \frac{1}{\nu} (B D v, D v)\bigg|,
$$
by the assumptions on $\alpha$, we have
$$
0 < \beta  < 1.
$$
We can define a bilinear form on $V_h$ by the formula
\begin{align*}
(p,q) & = \int_{\Omega} [(I -\frac{1}{\nu}(B(x)) D p(x)] \cdot D q(x)  \ud x.
\end{align*}
Then because
$$
(p,q) = \frac{1}{4} ((p+q,p+q) - (p-q,p-q)),
$$
and using the definition of $\beta$, we get assuming that $|p|_{H^1}=|q|_{H^1}=1$,
\begin{align*}
|(p,q)| & \leq \frac{\beta}{4} (p+q,p+q) + \frac{\beta}{4}  (p-q,p-q) \\
& \leq \frac{\beta}{4} |p+q|_{H^1}^2 + \frac{\beta}{4}  |p-q|_{H^1}^2 = \beta.
\end{align*}
This completes the proof.

\end{proof}

\section{Error analysis of the mixed method for smooth solutions} \label{error}
We will assume without loss of generality that $h \leq 1$.
The goal of this section is to prove the local solvability of \eqref{m11h} for Lagrange elements of degree $k \geq 2$. We define for $\rho >0$, 
$$\bar B_h(\rho)=\{(w_h, \eta_h) \in V_h\times \Sigma_h,\ \| w_h-I_hu\|_{H^1 }\leq \rho,\ \|\eta_h-I_h\sigma\|_{L^{2}}\leq h^{-1}\rho\}.$$
We are interested in elements $(w_h, \eta_h) \in V_h\times \Sigma_h$ satisfying
\begin{equation} \label{discrete-H}
(\eta_h, \tau)+(\div \tau, Dw_h)-\<Dw_h, \tau  n\>=0, \forall  \tau\in \Sigma_h.
\end{equation}
We define
\begin{align*}
\begin{split} %\label{zh}
Z_h & =\{ \, (w_h, \eta_h) \in V_h\times \Sigma_h,  w_h =g_h \,  \text{on} \, \partial \Omega,  (w_h, \eta_h) \ \text{solves} \ \eqref{discrete-H}
 \, \} \, \text{and}
 \end{split}
\end{align*}
\begin{equation*} %\label{ball-def}
B_h(\rho)=\bar B_h(\rho)\cap Z_h.
\end{equation*}

In \cite{AwanouLiMixed1} the local solvability of \eqref{m11h} was obtained by a fixed point argument which consists in a linearization at the exact solution of \eqref{m1}. To be able to obtain results for quadratic elements we use a time marching method combined with a rescaling argument. 
This is the point of view we took in \cite{Awanou-Std01,Awanou-Std-fd}. We first describe the time marching method at the continuous level.

Let $\nu >0$. We consider the sequence of problems
\begin{align*}
-\nu \Delta u^{r+1} &= -\nu \Delta u^{r} + \det D^2 u^r -f  \, \text{in} \, \Omega \\
u^{r+1} &= g \, \text{on} \, \partial \Omega.
\end{align*}
Put $\sigma^{r+1}=D^2 u^{r+1}$. We obtain the equivalent problems
\begin{align*}
\sigma^{r+1}& =D^2 u^{r+1}  \, \text{in} \, \Omega \\
-\nu \tr \sigma^{r+1} &= -\nu \tr \sigma^{r} + \det \sigma^r -f,  \, \text{in} \, \Omega \\
u^{r+1} &= g \, \text{on} \, \partial \Omega,
\end{align*} 
where $\tr A$ denotes the trace of the matrix $A$.

We are thus lead to consider the sequence of discrete problems:
find $(u_h^{r+1}, \sigma_h^{r+1}) \in V_h \times \Sigma_h$ such that $u_h^{r+1}  = g_h \, \text{on} \, \partial \Omega$ and
\begin{align} %\label{m11h-iter}
%\begin{split}
(\sigma_h^{r+1},\tau) & + (\div \tau, D u_h^{r+1}) - \< D u_h^{r+1}, \tau n \>  = 0, \forall \tau \in \Sigma_h \label{m11h-iter01}\\
-\nu (\tr \sigma^{r+1},v) &= -\nu (\tr \sigma^{m},v) +(\det \sigma_h^{r}-f, v)  , \, \forall v \in V_h \cap H_0^1(\Omega), \label{m11h-iter02}
%u_h^{m+1} & = g_h \, \text{on} \, \partial \Omega,
%\end{split}
\end{align}
given an initial guess $(u^0_h,\sigma^0_h)$. We prove below the convergence of $(u_h^{r+1}, \sigma_h^{r+1}) $ to a local solution $(u_h,\sigma_h)$ of the discrete problem \eqref{m11h}. Although \eqref{m11h-iter01}--\eqref{m11h-iter02} may be used in the computations, it is better to use in practice  Newton's method. % the convergence of which is given in section \ref{newton}. 

%Let $$0<  < 1.$$ 
%The motivation to introduce $$ is to solve the rescaled equation $\det  u =  f$. 
Let $\alpha >0$. We define a mapping $T: V_h\times \Sigma_h\rightarrow V_h\times\Sigma_h$ by 
\begin{eqnarray*}
T(w_h, \eta_h)=(T_1(w_h, \eta_h), T_2(w_h, \eta_h)), 
\end{eqnarray*}
where $T_1(w_h, \eta_h)$ and  $T_2(w_h, \eta_h)$ satisfy
\begin{align}
\begin{split}
(\eta_h-T_2(w_h, \eta_h), \tau)& +(\div\tau, D(w_h-T_1(w_h, \eta_h)))  \\
& \  -\<D(w_h-T_1(w_h, \eta_h)), \tau n\> =(\eta_h, \tau) \\
& \qquad \qquad \qquad  +(\div \tau, Dw_h)-\<Dw_h, \tau n\>, \quad \forall\ \tau\in \Sigma_h \label{eqn.1}
\end{split}
\end{align}
\begin{align}
-\nu (\tr T_2(w_h, \eta_h),v)  & = - \nu (\tr \eta_h,v) + (\det \eta_h - \alpha^2  f, v), \ \forall\ v\in V_h\cap H^1_0(\Omega) \label{eqn.2}\\
T_1(w_h, \eta_h)&=w_h \quad \text{on}\quad \partial\Omega. \label{eqn.3}
\end{align}
Note that \eqref{eqn.1} is equivalent to
\begin{align}
(T_2(w_h, \eta_h), \tau) +(\div\tau, D T_1(w_h, \eta_h) )    -\<D T_1(w_h, \eta_h), \tau n\>  = 0 \  \forall  \ \tau\in \Sigma_h. \label{eqn.11}
\end{align}
Let $I$ denote the $2 \times 2$ identity matrix. We first make the following important observation. 

For $v\in V_h\cap H^1_0(\Omega)$ and $\tau=v I$, we have $\div \tau= D v$ and since $v=0$ on $\partial \Omega$, we have in addition $\tau n=0$ on $\partial \Omega$. Thus using \eqref{eqn.11} %and \eqref{eqn.2} 
we obtain
\begin{equation} \label{w-eta-comp}
-\nu (\tr T_2(w_h, \eta_h),v) = - \nu (T_2(w_h, \eta_h), v I) = \nu (D T_1(w_h, \eta_h), D v).
\end{equation}
Similarly, we obtain that if $(w_h,\eta_h)$ solves \eqref{discrete-H}, then 
\begin{equation} \label{w-eta-comp2}
(\tr \eta_h ,v) = -(D w_h, D v), \forall v\in V_h\cap H^1_0(\Omega).
\end{equation}
\begin{lemma} \label{final-lem2}
The mapping $T$ is well defined 
and if $(  \alpha w_h,  \alpha \eta_h)$ is a fixed point  of \eqref{eqn.1}--\eqref{eqn.3} with $w_h=g_h$ on $\partial \Omega$, then $( w_h,  \eta_h)$ solves the nonlinear problem \eqref{m11h}.
\end{lemma}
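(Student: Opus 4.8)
The plan is to verify the two assertions in turn. Neither requires anything beyond elementary linear algebra on the finite element spaces, so I do not anticipate a serious obstacle; the only point to handle with care is that the coupled system \eqref{eqn.1}--\eqref{eqn.3} must be shown to \emph{decouple} into two subproblems that are each uniquely solvable, and in the correct order.

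\emph{Well-definedness.} Fix $(w_h,\eta_h)\in V_h\times\Sigma_h$. I would first observe, exactly as in the derivation of \eqref{w-eta-comp}, that any pair $(T_1,T_2)$ satisfying the equivalent form \eqref{eqn.11} must satisfy $-\nu(\tr T_2,v)=\nu(D T_1,D v)$ for all $v\in V_h\cap H^1_0(\Omega)$, obtained by testing with $\tau=vI$ (for which $\div\tau=Dv$ and $\tau n=0$ on $\partial\Omega$ since $v=0$ there). Combining this with \eqref{eqn.2} shows that $T_1$ is forced to solve the discrete Dirichlet problem
\begin{equation*}
\nu(D T_1,D v)=-\nu(\tr\eta_h,v)+(\det\eta_h-\alpha^2 f,v),\qquad\forall v\in V_h\cap H^1_0(\Omega),
\end{equation*}
together with $T_1=w_h$ on $\partial\Omega$. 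Since $\nu>0$, the bilinear form $\nu(D\cdot,D\cdot)$ is an inner product on the finite-dimensional space $V_h\cap H^1_0(\Omega)$ (Poincar\'e inequality) and $T_1-w_h\in V_h\cap H^1_0(\Omega)$, so this problem has a unique solution $T_1\in V_h$. Given this $T_1$, the identity \eqref{eqn.11} then reads $(T_2,\tau)=-(\div\tau,D T_1)+\langle D T_1,\tau n\rangle$ for all $\tau\in\Sigma_h$; its right-hand side is a linear functional on the finite-dimensional inner product space $(\Sigma_h,(\cdot,\cdot))$, so $T_2\in\Sigma_h$ exists and is unique by Riesz representation. It then remains only to check that this pair $(T_1,T_2)$ actually satisfies \eqref{eqn.2}: testing \eqref{eqn.11} with $\tau=vI$ returns $-\nu(\tr T_2,v)=\nu(D T_1,D v)$, and inserting the Dirichlet problem above recovers \eqref{eqn.2}. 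Equation \eqref{eqn.1} is equivalent to \eqref{eqn.11}, and \eqref{eqn.3} holds by construction, so $T$ is well defined (and $(T_1,T_2)$ is unique, by running the same argument for the difference of two candidates).

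\emph{Fixed points solve \eqref{m11h}.} Suppose $T(\alpha w_h,\alpha\eta_h)=(\alpha w_h,\alpha\eta_h)$ with $w_h=g_h$ on $\partial\Omega$, so that $T_1(\alpha w_h,\alpha\eta_h)=\alpha w_h$ and $T_2(\alpha w_h,\alpha\eta_h)=\alpha\eta_h$. Substituting these into \eqref{eqn.11} written for the argument $(\alpha w_h,\alpha\eta_h)$ gives $\alpha\big[(\eta_h,\tau)+(\div\tau,D w_h)-\langle D w_h,\tau n\rangle\big]=0$ for all $\tau\in\Sigma_h$; dividing by $\alpha>0$ yields the first equation of \eqref{m11h} with $(u_h,\sigma_h)=(w_h,\eta_h)$. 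Likewise, in \eqref{eqn.2} the two trace terms $-\nu(\tr(\alpha\eta_h),v)$ cancel, leaving $(\det(\alpha\eta_h)-\alpha^2 f,v)=0$ for all $v\in V_h\cap H^1_0(\Omega)$. Since the determinant of a $2\times2$ matrix is homogeneous of degree two, $\det(\alpha\eta_h)=\alpha^2\det\eta_h$, so after dividing by $\alpha^2$ this is exactly $(\det\eta_h,v)=(f,v)$ for all $v\in V_h\cap H^1_0(\Omega)$, the second equation of \eqref{m11h}. The boundary condition $u_h=g_h$ on $\partial\Omega$ is the standing hypothesis on $w_h$. Hence $(w_h,\eta_h)$ solves \eqref{m11h}. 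This last step is also where the restriction to two dimensions enters: the choice of the scaling $\alpha^2 f$ in \eqref{eqn.2} is dictated by the degree of homogeneity of $\det$, so the same device in $3$D would require $\alpha^3 f$.
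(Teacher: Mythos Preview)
Your proof is correct. For the second assertion you spell out what the paper simply calls ``immediate,'' and your computation (cancelling the trace terms in \eqref{eqn.2} and using the degree-two homogeneity of the $2\times2$ determinant) is exactly the intended one.

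For the first assertion the two arguments differ in style. The paper treats \eqref{eqn.11}--\eqref{eqn.3} as a square linear system in the unknown $(T_1,T_2)$ on the finite-dimensional space $V_h\times\Sigma_h$ and proves injectivity of the associated homogeneous problem: if $(w_h,\eta_h)$ satisfies $w_h=0$ on $\partial\Omega$, \eqref{eqn.11}, and $-\nu(\tr\eta_h,v)=0$, then testing with $\tau=vI$ gives $(Dw_h,Dv)=0$, hence $w_h=0$ by Poincar\'e, and then $\tau=\eta_h$ gives $\eta_h=0$. Existence follows from finite-dimensional linear algebra. You instead decouple the system constructively: first extract the discrete Dirichlet problem for $T_1$ (by testing \eqref{eqn.11} with $\tau=vI$ and combining with \eqref{eqn.2}), solve it via coercivity of $\nu(D\cdot,D\cdot)$ on $V_h\cap H_0^1(\Omega)$, and then recover $T_2$ from \eqref{eqn.11} by Riesz representation, finally verifying that the resulting pair does satisfy \eqref{eqn.2}. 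Your route is more explicit and makes transparent that the system is sequentially solvable; the paper's route is shorter and more in the spirit of a standard well-posedness check. Both rely on the same key observation, namely the identity $-\nu(\tr T_2,v)=\nu(DT_1,Dv)$ obtained by testing with $\tau=vI$.
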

\begin{proof}
%The proof of the second assertion is immediate. For 
To prove the first assertion, it is enough to prove that if $(w_h,\eta_h) \in V_h \times \Sigma_h$ is such that $w_h  = 0 \, \text{on} \, \partial \Omega$ and
\begin{align*} %\label{m11h-iter}
%\begin{split}
(\eta_h,\tau)  + (\div \tau, D w_h) - \< D w_h, \tau n \>  &= 0, \forall \tau \in \Sigma_h \\
-\nu (\tr \eta_h,v) &=  0 , \, \forall v \in V_h \cap H_0^1(\Omega), 
%u_h^{m+1} & = g_h \, \text{on} \, \partial \Omega,
%\end{split}
\end{align*}
then $w_h=0$ and $\eta_h=0$.

Using \eqref{w-eta-comp2}, we obtain $0= - (\tr \eta_h,v)  = (D w_h ,D v)$, for all $v \in V_h\cap H^1_0(\Omega)$. Thus $|w_h|^2_{H^1} =0$. This proves that $w_h=0$ by Poincar\'e's inequality. Using $\tau=\eta_h$ we obtain as well $\eta_h=0$.

The proof of the second assertion is immediate.
\end{proof}

We recall from \cite[Remark 3.6]{AwanouLiMixed1}, see also \cite{Neilan2013,Lakkis11b}, that for $v_h \in V_h$, there exists a unique $\eta_h \in \Sigma_h$ denoted $H(v_h)$, such that 
\begin{equation} \label{disc-H}
(H(v_h), \tau)+(\div \tau, D v_h)-\<D v_h, \tau n\>=0, \forall  \tau\in \Sigma_h,
\end{equation}
 holds. To see this %See also the next lemma.
 consider the problem: find $\eta_h \in \Sigma_h$ such that
\begin{eqnarray} \label{disc-H2}
(\eta_h, \tau)=-(\div\tau, D v_h )+\<D v_h, \tau n\>, \quad \forall  \tau\in \Sigma_h.
\end{eqnarray}
For $\tau\in \Sigma_h$, we define $F(\tau) = -(\div\tau, D v_h )+\<D v_h, \tau n\>$. Clearly $F$ is linear. By the Schwarz inequality, \eqref{inverse1} and \eqref{trace-inverse}
\begin{align*}
|-(\div\tau, D v_h)+\<D v_h, \tau\cdot n\> |& \leq C ||\tau||_{H^1} || v_h||_{H^1} + C || v_h||_{H^1(\partial \Omega)} ||\tau||_{L^2(\partial \Omega)} \\
& \leq C (h^{-1}  || v_h||_{H^1} +  h^{-\frac{1}{2}} || v_h||_{H^1(\partial \Omega)} ) ||\tau||_{L^2}.
\end{align*}
Thus a unique solution $\eta_h=H(v_h)$ exists by the Lax-Milgram Lemma. 
\begin{rem} \label{H-vh-rem}
From the definition of $H(v_h)$ \eqref{disc-H} and \eqref{disc-H2}, we have for $v_h \in V_h$, 
$$
H(\alpha v_h) = \alpha H(v_h).
$$

\end{rem}
\begin{lemma} \label{est-disc-H}
Let $v_h \in V_h$ such that $||v_h -I_h u||_{H^1} \leq \mu$. Then
$$
||H(v_h) - I_h \sigma||_{L^2} \leq C h^{-1} \mu  +Ch^{k-1}.
$$
\end{lemma}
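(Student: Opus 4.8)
The plan is to estimate $H(v_h) - I_h \sigma$ by inserting an intermediate quantity, namely the $L^2$-projection-like object naturally attached to the exact Hessian $\sigma = D^2 u$. Concretely, I would write $H(v_h) - I_h\sigma = (H(v_h) - H(I_h u)) + (H(I_h u) - I_h \sigma)$ and bound the two pieces separately. The first difference is controlled by the \emph{linearity} of $H$ (Remark \ref{H-vh-rem}, or rather the linearity of the defining relation \eqref{disc-H} in $v_h$): $H(v_h) - H(I_h u) = H(v_h - I_h u)$, and then one applies the stability bound used to establish existence of $H$ via Lax--Milgram. Taking $\tau = H(v_h - I_h u)$ in \eqref{disc-H} written for $v_h - I_h u$, and using the Schwarz inequality together with the inverse inequality \eqref{inverse1} and the scaled trace inequality \eqref{trace-inverse} exactly as in the displayed estimate for $F(\tau)$ just above the statement, gives
\[
\|H(v_h - I_h u)\|_{L^2} \leq C\bigl( h^{-1} \|v_h - I_h u\|_{H^1} + h^{-\frac12} \|v_h - I_h u\|_{H^1(\partial\Omega)} \bigr).
\]
Since $v_h - I_h u \in V_h$, the boundary term is absorbed by \eqref{trace-inverse} (or by $v_h - I_h u = 0$ on $\partial\Omega$ if one arranges $v_h$ to match $I_h u$ there; in general one just uses the trace-inverse inequality), so this piece is $\leq C h^{-1}\mu$, which is the first term in the claimed bound.

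For the second piece $H(I_h u) - I_h \sigma$, I would use that the exact pair $(u,\sigma)$ satisfies the continuous identity \eqref{m11}, i.e. $(\sigma,\tau) + (\div\tau, Du) - \langle Du, \tau n\rangle = 0$ for all $\tau \in H^1(\Omega)^{2\times 2}$, hence in particular for all $\tau \in \Sigma_h$. Subtracting this from the definition \eqref{disc-H} of $H(I_h u)$ yields, for all $\tau \in \Sigma_h$,
\[
(H(I_h u) - \sigma, \tau) + (\div\tau, D(I_h u - u)) - \langle D(I_h u - u), \tau n\rangle = 0,
\]
and then replacing $\sigma$ by $I_h\sigma$ gives $(H(I_h u) - I_h\sigma, \tau) = (\sigma - I_h\sigma, \tau) - (\div\tau, D(I_h u - u)) + \langle D(I_h u - u), \tau n\rangle$. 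Choosing $\tau = H(I_h u) - I_h \sigma \in \Sigma_h$, bounding the right-hand side by Schwarz, and using the interpolation estimates \eqref{interpol} for $\sigma - I_h\sigma$ and for $D(I_h u - u)$ (the latter with $j=1$, giving $h^{k-1}$ in broken $H^1$ or after a trace estimate for the boundary term), together with the inverse inequality \eqref{inverse1} and the trace-inverse inequality \eqref{trace-inverse} on the factor $\tau$, produces $\|H(I_h u) - I_h\sigma\|_{L^2} \leq C h^{k-1}$. Combining the two bounds gives the assertion.

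The main obstacle is bookkeeping with the boundary terms: the factor $h^{-1/2}$ from the scaled trace inequality applied to $\tau$ must be matched against an $h^{1/2}$-better interpolation estimate on $\partial\Omega$ for $D(I_h u - u)$, and one must be careful that the negative powers of $h$ on the $\tau$ side (coming from $\div\tau$ via \eqref{inverse1} and from $\tau n$ via \eqref{trace-inverse}) are exactly what forces the first term to be $h^{-1}\mu$ rather than something smaller — so the estimate is sharp and there is no room for slack. Everything else is a routine application of Lax--Milgram stability plus the standard interpolation and inverse inequalities collected in Section \ref{notation}.
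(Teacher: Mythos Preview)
Your proposal is correct and is essentially the paper's own argument, only reorganized: the paper writes $(H(v_h)-I_h\sigma,\tau)=(\sigma-I_h\sigma,\tau)-(\div\tau,D(v_h-u))+\langle D(v_h-u),\tau n\rangle$ and then splits $v_h-u=(v_h-I_hu)+(I_hu-u)$ inside this single identity, whereas you split $H(v_h)-I_h\sigma=H(v_h-I_hu)+(H(I_hu)-I_h\sigma)$ first and then treat each piece by exactly the same tools (the continuous identity \eqref{m11}, the definition \eqref{disc-H}, Schwarz, the inverse inequality \eqref{inverse1}, the scaled trace inequality \eqref{trace-inverse}, and \eqref{interpol}). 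The only cosmetic point is that Remark~\ref{H-vh-rem} records homogeneity rather than additivity of $H$, but the full linearity you use is immediate from the linearity of \eqref{disc-H} in $v_h$.
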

\begin{proof}

For $\tau \in \Sigma_h$, by \eqref{m11} and \eqref{disc-H} we have
\begin{align*}
(H(v_h) -I_h\sigma, \tau) & = (H(v_h) - \sigma, \tau) + (\sigma -I_h\sigma, \tau)  \\
& = (\sigma-I_h\sigma, \tau)-(\div\tau, D(v_h-u))+\<D( v_h-u), \tau n\> \\
& = (\sigma-I_h\sigma, \tau)-(\div\tau, D(v_h-I_h u))+\<D( v_h- I_h u), \tau n\> \\
& \qquad \qquad -(\div\tau, D(I_h u -u))+\<D( I_h u -u), \tau n\>. 
\end{align*}
Let $\tau=H(v_h) -I_h\sigma$. By the Schwarz inequality, \eqref{inverse1} and \eqref{trace-inverse}
\begin{align*}
\| \tau \|_{L^2}^2 & \leq \|\sigma-I_h\sigma\|_{L^2} \| \tau \|_{L^2}+ C \|\tau\|_{H^1}\|D(v_h - I_hu)\|_{L^2}\\
& \qquad  +C \|D(v_h - I_hu)\|_{L^2(\partial\Omega)}\|\tau \|_{L^2(\partial\Omega)} + C \|\tau\|_{H^1}\|D( I_hu -u )\|_{L^2}\\
& \qquad \qquad \qquad+C \|D(I_h u -u)\|_{L^2(\partial\Omega)}\|\tau \|_{L^2(\partial\Omega)} \\
& \leq \|\sigma-I_h\sigma\|_{L^2} \|\tau \|_{L^2}+ C h^{-1} \mu \|\tau\|_{L^2} +C h^{-1} \|D(v_h - I_hu)\|_{L^2(\Omega)}\|\tau \|_{L^2(\Omega)} \\
& \qquad +C h^{-1} \|\tau\|_{L^2}\| I_hu -u \|_{H^1}\ + C h^{-\frac{1}{2}} \|D(I_hu-u)\|_{L^2(\partial\Omega)}\|\tau\|_{L^2}.
\end{align*}
Therefore
\begin{align*}
\|\tau \|_{L^2} & \leq   Ch^{k+1}+ C h^{-1} \mu  +Ch^{k-1}+Ch^{k-\frac{1}{2}}  \\
& \leq C h^{-1} \mu  +Ch^{k-1}.
\end{align*}
%Namely $\|\eta_h-I_h\sigma\|_{L^2} \leq \rho$ for $h$ sufficiently  small and $k \geq 2$. 
This proves the result.
\end{proof}
It follows from Lemma \ref{est-disc-H}, with $\mu=0$, that $(I_h u, H(I_h u)) \in B_h(\rho)$, i.e. 
 the ball $B_h(\rho)\neq \emptyset$ for %$h$ sufficiently small and 
$\rho = C_0 h^{k}$ for a constant  $C_0 >0$. See also \cite[Lemma 3.5]{AwanouLiMixed1}. As a consequence, see also \cite{Neilan2013},
\begin{equation} \label{disc-H-Ihu}
|| H(I_h u) - I_h \sigma ||_{L^2} \leq C_0 h^{k-1}.
\end{equation}

Let
$$
\tilde{B}_h(\rho) = \{ \, v_h \in V_h, v_h=g_h \, \text{on} \, \partial \Omega, ||v_h-I_hu||_{H^1} \leq \rho \, \},
$$
and consider the mapping
\begin{align*}
\tilde{T}_1: V_h \to V_h, \, \text{defined by} \, \tilde{T}_1(v_h) = T_1(v_h, H(v_h)).
\end{align*}

The motivation to introduce a discrete Hessian $H(v_h)$ in this paper, as opposed to the approach in \cite{AwanouLiMixed1}, is given by Lemma \ref{final-lem0} below. 

\begin{lemma} \label{final-lem0}
If $w_h$ is a fixed point of $\tilde{T}_1$, then $(w_h,H(w_h))$ is a fixed point of $T$ and equivalently, if $(w_h,\eta_h)$ is a fixed point of $T$, then $w_h$ is a fixed point of $\tilde{T}_1$.
\end{lemma}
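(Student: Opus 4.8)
The plan is to unwind the definitions of $T$, $\tilde{T}_1$ and $H(\cdot)$ and to lean on two facts already in hand: the equivalent form \eqref{eqn.11} of the first pair of equations defining $T$, and the uniqueness of the discrete Hessian $H(v_h)$ (the Lax--Milgram solution of \eqref{disc-H2}). The observation to isolate first is that, for any pair $(w_h,\eta_h)$, equation \eqref{eqn.11} determines $T_2(w_h,\eta_h)$ uniquely once $T_1(w_h,\eta_h)$ is known, and it does so by exactly the relation that defines $H$ of $T_1(w_h,\eta_h)$; informally, \eqref{eqn.11} says $T_2(w_h,\eta_h)=H(T_1(w_h,\eta_h))$.

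For the forward implication, I would assume $w_h=\tilde{T}_1(w_h)=T_1(w_h,H(w_h))$. The first component of the fixed-point identity $T(w_h,H(w_h))=(w_h,H(w_h))$ is then immediate, so it remains to check $T_2(w_h,H(w_h))=H(w_h)$. Writing \eqref{eqn.11} for the pair $(w_h,H(w_h))$ and inserting $T_1(w_h,H(w_h))=w_h$ gives $(T_2(w_h,H(w_h)),\tau)+(\div\tau,Dw_h)-\langle Dw_h,\tau n\rangle=0$ for all $\tau\in\Sigma_h$; subtracting the defining relation \eqref{disc-H} for $H(w_h)$ yields $(T_2(w_h,H(w_h))-H(w_h),\tau)=0$ for all $\tau\in\Sigma_h$, and taking $\tau=T_2(w_h,H(w_h))-H(w_h)\in\Sigma_h$ finishes it. Well-definedness of $T$ at $(w_h,H(w_h))$ is provided by Lemma \ref{final-lem2}.

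For the converse, I would assume $(w_h,\eta_h)$ is a fixed point of $T$, i.e.\ $T_1(w_h,\eta_h)=w_h$ and $T_2(w_h,\eta_h)=\eta_h$. Substituting both into \eqref{eqn.11} for $(w_h,\eta_h)$ gives $(\eta_h,\tau)+(\div\tau,Dw_h)-\langle Dw_h,\tau n\rangle=0$ for all $\tau\in\Sigma_h$, which is precisely the equation \eqref{disc-H} characterizing $H(w_h)$; by uniqueness of the solution of \eqref{disc-H2} we conclude $\eta_h=H(w_h)$. Hence $\tilde{T}_1(w_h)=T_1(w_h,H(w_h))=T_1(w_h,\eta_h)=w_h$, so $w_h$ is a fixed point of $\tilde{T}_1$.

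There is no real obstacle here; the only point requiring care is the appeal to uniqueness of $H(v_h)$, since it is exactly this that lets the constraint \eqref{eqn.11} pin the second component of any fixed point of $T$ to the discrete Hessian of the first. This is the reduction anticipated in the remark preceding the lemma: it collapses the two-field fixed-point problem for $T$ to the scalar fixed-point problem for $\tilde{T}_1$ on $V_h$. Note that equation \eqref{eqn.2} plays no role in this lemma; it will enter only in the later analysis of the map $\tilde{T}_1$.
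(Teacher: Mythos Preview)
Your proof is correct and follows essentially the same route as the paper's: both directions hinge on the observation that \eqref{eqn.11} forces $T_2(w_h,\eta_h)=H(T_1(w_h,\eta_h))$, so a fixed point of $T$ must have $\eta_h=H(w_h)$. Your version is in fact more explicit than the paper's, which in the converse direction writes $T_1(w_h,H(w_h))=T_1(w_h,\eta_h)$ without spelling out the intermediate step $\eta_h=H(w_h)$ that you justify carefully via uniqueness in \eqref{disc-H2}.
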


\begin{proof}
The result was given as \cite[Remark 3.6 ]{AwanouLiMixed1}. Let $w_h$ be a fixed point of $\tilde{T}_1$. We have $T_1(w_h,H(w_h)) = w_h$ and by \eqref{eqn.11} and \eqref{disc-H},  $T_2(w_h,H(w_h)) = H(T_1(w_h,H(w_h))) = H(w_h)$. This proves that $(w_h,H(w_h))$ is a fixed point of $T$.

Conversely if $(w_h,\eta_h)$ is a fixed point of $T$, then $\tilde{T}_1(w_h) = T_1(w_h,H(w_h)) = T_1(w_h,\eta_h) =  w_h$. This completes the proof.
\end{proof}

\begin{lemma} \label{move-ball}
%The mapping $T$ does not move the center  ($I_hu, I_h\sigma$) of the ball $\bar B_h(\rho)$ too far, i.e.
We have for $0 \leq \alpha \leq 1$
\begin{align}
|| \alpha  I_h u - T_1( \alpha I_h u, H( \alpha I_h u) )||_{H^1} & \leq  \frac{C_1}{\nu} \alpha^2  h^{k-1},   \label{u-ball} 
\end{align}
for a positive constant $C_1$. % and $C_2$.
\end{lemma}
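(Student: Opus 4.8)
The plan is to estimate $\|\alpha I_h u - \tilde{T}_1(\alpha I_h u)\|_{H^1}$ directly using the variational characterization \eqref{w-eta-comp} of $T_1$ projected against test functions of the form $vI$. Write $z_h = \alpha I_h u - T_1(\alpha I_h u, H(\alpha I_h u))$. Since $\alpha I_h u = g_h^{(\alpha)}$ and $T_1(\alpha I_h u, H(\alpha I_h u)) = \alpha I_h u$ agree on $\partial\Omega$ (by \eqref{eqn.3} and Remark \ref{H-vh-rem}, noting $T_1(\alpha I_h u, \cdot) = \alpha I_h u$ on the boundary), we have $z_h \in V_h \cap H^1_0(\Omega)$, so $|z_h|_{H^1}$ controls $\|z_h\|_{H^1}$ by Poincar\'e. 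First I would test \eqref{w-eta-comp} with $v = z_h$ to get
$$
\nu (D T_1(\alpha I_h u, H(\alpha I_h u)), D z_h) = -\nu(\tr T_2(\alpha I_h u, H(\alpha I_h u)), z_h),
$$
and then use \eqref{eqn.2} to rewrite the right side as $-\nu(\tr H(\alpha I_h u), z_h) + (\det H(\alpha I_h u) - \alpha^2 f, z_h)$. Combining with \eqref{w-eta-comp2} applied to $H(\alpha I_h u) = \alpha H(I_h u)$ (valid since $(\alpha I_h u, \alpha H(I_h u))$ solves \eqref{discrete-H} by Remark \ref{H-vh-rem}), the term $-\nu(\tr H(\alpha I_h u), z_h)$ becomes $\nu(D(\alpha I_h u), D z_h)$. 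This yields the clean identity
$$
\nu |z_h|_{H^1}^2 = \nu(D z_h, D z_h) = -(\det(\alpha H(I_h u)) - \alpha^2 f, z_h) = -\alpha^2(\det H(I_h u) - f, z_h),
$$
using homogeneity of the determinant in 2D.

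Next I would estimate $\|\det H(I_h u) - f\|$ in a suitable norm. Since $f = \det D^2 u = \det\sigma$, write $\det H(I_h u) - f = \det H(I_h u) - \det\sigma$. Applying the mean value identity \eqref{mean-v} componentwise on each $K$ (or better, the algebraic identity $\det A - \det B = \cof(\tfrac12(A+B)):(A-B)$ from the remark after \eqref{mean-v}), this equals $\cof(\tfrac12(H(I_h u) + \sigma)):(H(I_h u) - \sigma)$. Then by \eqref{cof-est},
$$
\|\det H(I_h u) - \det\sigma\|_{L^2} \leq C\,\|\tfrac12(H(I_h u)+\sigma)\|_{L^\infty}\,\|H(I_h u) - \sigma\|_{L^2}.
$$
The $L^\infty$ factor is bounded uniformly: $\|H(I_h u)\|_{L^\infty} \leq \|H(I_h u) - I_h\sigma\|_{L^\infty} + \|I_h\sigma\|_{L^\infty} \leq Ch^{-1}\|H(I_h u) - I_h\sigma\|_{L^2} + C \leq Ch^{-1}\cdot C_0 h^{k-1} + C \leq C$ for $k \geq 2$, using the inverse inequality \eqref{inverse0} and \eqref{disc-H-Ihu}. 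The $L^2$ factor is bounded by $\|H(I_h u) - I_h\sigma\|_{L^2} + \|I_h\sigma - \sigma\|_{L^2} \leq C_0 h^{k-1} + Ch^{k+1} \leq Ch^{k-1}$, again by \eqref{disc-H-Ihu} and \eqref{interpol}. Hence $\|\det H(I_h u) - f\|_{L^2} \leq Ch^{k-1}$.

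Finally, I would close the estimate: from $\nu|z_h|_{H^1}^2 = -\alpha^2(\det H(I_h u) - f, z_h)$ and Cauchy-Schwarz together with Poincar\'e ($\|z_h\|_{L^2} \leq C|z_h|_{H^1}$),
$$
\nu|z_h|_{H^1}^2 \leq \alpha^2\|\det H(I_h u) - f\|_{L^2}\|z_h\|_{L^2} \leq C\alpha^2 h^{k-1}|z_h|_{H^1},
$$
so $|z_h|_{H^1} \leq (C_1/\nu)\alpha^2 h^{k-1}$, which is the claimed bound \eqref{u-ball} after absorbing the Poincar\'e constant into $C_1$. The main obstacle — and the reason this is worth stating as a lemma — is ensuring the $L^\infty$ bound on $\cof$ of the averaged Hessian is genuinely $h$-independent; this is exactly where $k \geq 2$ enters (so that $Ch^{-1}\cdot h^{k-1} = Ch^{k-2}$ stays bounded), and it explains why the argument fails for $k = 1$. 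A secondary subtlety is carefully justifying that $z_h \in H^1_0(\Omega)$ so that Poincar\'e applies and the boundary terms from testing with $z_h I$ vanish; this follows from \eqref{eqn.3} and Remark \ref{H-vh-rem}, but should be spelled out.
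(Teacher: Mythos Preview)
Your argument is correct and follows essentially the same route as the paper's own proof: derive the identity $\nu |z_h|_{H^1}^2 = -\alpha^2(\det H(I_h u) - f, z_h)$ from \eqref{w-eta-comp}, \eqref{eqn.2} and \eqref{w-eta-comp2}, then bound $\|\det H(I_h u) - \det\sigma\|_{L^2}$ via \eqref{mean-v}, \eqref{cof-est}, \eqref{disc-H-Ihu} and the inverse inequality (this is where $k\geq 2$ enters), and close with Cauchy--Schwarz and Poincar\'e. The only cosmetic difference is that the paper takes $v = T_1(\cdot) - \alpha I_h u$ rather than its negative, and your undefined notation $g_h^{(\alpha)}$ should simply be dropped in favor of the direct appeal to \eqref{eqn.3}.
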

\begin{proof}
Since $T_1( \alpha I_h u,  H(\alpha I_h u))- \alpha  I_h u=0$ on $\partial \Omega$, by \eqref{w-eta-comp} and \eqref{eqn.2} we have using $w_h= \alpha I_h u$, $\eta_h= H(\alpha I_h u)$ and 
$v=T_1(w_h,\eta_h)-w_h$
\begin{align*}
\nu (D T_1(w_h, \eta_h), D v) & = -\nu (\tr T_2(w_h, \eta_h),v) 
 =  - \nu (\tr \eta_h,v) + (\det \eta_h - \alpha^2 f, v).
\end{align*}
It follows that
\begin{align*} %\label{u1}
\begin{split}
\nu |  D v|_{L^2}^2 & = -\nu (D w_h, D v) - \nu (\tr \eta_h,v) + (\det \eta_h - \alpha^2  f, v).
 \end{split}
\end{align*}
Therefore, using \eqref{w-eta-comp2}, we get
\begin{align} \label{u1}
\begin{split}
\nu |  D v|_{L^2}^2 & =  (\det \eta_h - \alpha^2  f, v).
 \end{split}
\end{align}
On the other hand since $f=\det D^2 u = \det \sigma$, by  \eqref{mean-v} and Remark \ref{H-vh-rem}, on each element $K$
\begin{align} 
\begin{split} \label{partial-est1}
\det \eta_h - \alpha^2 f  & =  \det  H(\alpha I_h u) -  \alpha^2 \det  \sigma =  \det \alpha  H( I_h u) -  \alpha^2 \det  \sigma \\
& = \alpha^2 ( \det H( I_h u) -  \det  \sigma)\\
& =   \alpha^2 (\cof(t H( I_h u) +(1-t) \sigma):(H( I_h u) - \sigma)),
\end{split}
\end{align}
for some $t \in [0,1]$. 

By \eqref{interpol} we have $\|I_h\sigma\|_{L^{\infty}}\leq C\|\sigma\|_{L^{\infty}}$. Thus by \eqref{disc-H-Ihu} and \eqref{inverse0} 
\begin{align*}
|| H(I_h u) ||_{L^{\infty}} & \leq || H(I_h u) - I_h \sigma ||_{L^{\infty}} + \|I_h\sigma\|_{L^{\infty}} \leq C h^{-1} || H(I_h u) - I_h \sigma ||_{L^{2}} + \|I_h\sigma\|_{L^{\infty}} \\
& \leq C h^{k-2} + C\|\sigma\|_{L^{\infty}} \leq C, \, \text{since} \, k \geq 2.
\end{align*}

Thus by \eqref{cof-est} and  \eqref{disc-H-Ihu} 
\begin{align*}
\|\det(H(I_h u))-\det\sigma\|_{L^2(K)} & \leq C\|t H(I_h u)+(1-t)\sigma\|_{L^{\infty}(K)}^{}   \| H(I_h u)-\sigma\|_{L^2(K)} \\
&  \leq C \| H(I_h u) -\sigma\|_{L^2(K)} \\
& \leq C \| H(I_h u) -I_h \sigma\|_{L^2(K)} + C \| I_h \sigma  -\sigma\|_{L^2(K)} \\
& \leq C h^{k-1}.
\end{align*}
Therefore by \eqref{interpol} and \eqref{partial-est1}
\begin{align} \label{u3}
\begin{split}
\| \det \eta_h -  \alpha^2 f  \|_{L^2} = \alpha^2 \| \det(H(I_h u))-\det \sigma \|_{L^2}& \leq C \alpha^2   h^{k-1}.
\end{split}
\end{align}
And so combining \eqref{u1}--\eqref{u3}, \eqref{disc-H-Ihu}, Cauchy-Schwarz inequality, the interpolation error estimate \eqref{interpol} and Poincare's inequality, we get
\begin{align*}
 |v|_{H^1}^2 & \leq \frac{C}{\nu} \alpha^2 h^{k-1} ||v||_{L^2} \leq \frac{C}{\nu} \alpha^2 h^{k-1}  ||v||_{H^1},
\end{align*}
from which \eqref{u-ball} follows.

\end{proof}
We will need  the following lemma
\begin{lemma} \label{zh-lem}
Let  $(w_h, \eta_h) \in Z_h$. Then for a piecewise smooth symmetric matrix field $P$ %with $A=\cof D^2 u$,
\begin{align}
\begin{split}
((\cof P): \eta_h,v) +  ((\cof P) D w_h , D v )  & %- \sum_{e \in \mathcal{E}^i_h} \< [[D w_h]],(P_{\Sigma_h} (v A) ) n -(v A ) n \>_{e} \\
 \leq  C h ||v||_{H^1}  ||w_h||_{H^1},
\end{split}
\end{align}
for all $v \in V_h \cap H_0^1(\Omega)$ and for a constant $C$ which depends on $||\cof P||_{H^{k+1}(\mathcal{T}_h)}$.
\end{lemma}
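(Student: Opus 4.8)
The plan is to exploit the defining relation \eqref{discrete-H} of $Z_h$ by testing it against a cleverly chosen matrix field $\tau \in \Sigma_h$ built from $P$ and $v$. Specifically, since $(w_h,\eta_h)$ solves \eqref{discrete-H}, for every $\tau \in \Sigma_h$ we have $(\eta_h,\tau) = -(\div\tau, Dw_h) + \langle Dw_h, \tau n\rangle$. The natural candidate is $\tau = I_h\big((\cof P)\, v\big)$, the Lagrange interpolant of the matrix field $(\cof P) v$; note $\tau \in \Sigma_h$ because $\cof P$ is symmetric (for $2\times 2$ matrices the cofactor of a symmetric matrix is symmetric) and $v$ is scalar. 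Since $v = 0$ on $\partial\Omega$ and $v$ is continuous, the interpolant $\tau$ also vanishes on $\partial\Omega$, so the boundary term $\langle Dw_h, \tau n\rangle$ drops out. This reduces the task to estimating
\begin{align*}
((\cof P):\eta_h, v) + ((\cof P) Dw_h, Dv)
\end{align*}
by comparing $\eta_h$ tested against $I_h((\cof P)v)$ with $\eta_h$ tested against the (non-discrete) field $(\cof P)v$.

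The key steps, in order, are as follows. First, write $((\cof P):\eta_h, v) = (\eta_h, (\cof P) v)$ using the symmetry of $\cof P$, then split $(\eta_h, (\cof P)v) = (\eta_h, I_h((\cof P)v)) + (\eta_h, (\cof P)v - I_h((\cof P)v))$. Second, apply \eqref{discrete-H} with $\tau = I_h((\cof P)v)$ to get $(\eta_h, I_h((\cof P)v)) = -(\div I_h((\cof P)v), Dw_h)$, the boundary term vanishing as noted. Third, add and subtract $\div((\cof P)v)$ and use the identity $\div((\cof P)v) = (\cof P)Dv + (\div(\cof P))v$; here one invokes the classical fact that $\div \cof P = 0$ when $P = D^2\phi$ is an exact Hessian — but $P$ is only a general piecewise smooth symmetric matrix field, so instead one keeps the term $(\div(\cof P))v$ and bounds it directly by $C\|v\|_{L^2}\|w_h\|_{H^1}$ with $C$ depending on $\|\cof P\|_{H^{k+1}(\mathcal{T}_h)}$ (via $\|\div\cof P\|_{L^\infty}$). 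Fourth, the leading cancellation: $-(\div((\cof P)v), Dw_h) + ((\cof P)Dw_h, Dv)$ — wait, rather, one sees that $-( (\cof P) Dv, Dw_h)$ exactly cancels the wanted term $((\cof P)Dw_h, Dv)$, leaving only the commutator-type remainders. Fifth, collect the two interpolation-error remainders: $(\eta_h, (\cof P)v - I_h((\cof P)v))$ and $(\div(I_h((\cof P)v) - (\cof P)v), Dw_h)$, and estimate each using \eqref{interpol} for the matrix interpolant, the inverse inequality \eqref{inverse1} on $Dw_h$, and the bound on $\|\eta_h\|$ (controlled since $(w_h,\eta_h) \in Z_h$ sits in a ball, or directly via \eqref{disc-H2}-type estimates). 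The interpolation error of $(\cof P)v$ in $H^0$ and $H^1$ gives factors of $h^{k+1-j}$ times $\|(\cof P)v\|_{H^{k+1}}$, which by a product rule is controlled by $\|\cof P\|_{H^{k+1}(\mathcal{T}_h)}\|v\|_{H^{k+1}(\mathcal{T}_h)}$ — but $v \in V_h$ is only piecewise polynomial of degree $k$, so $\|v\|_{H^{k+1}(\mathcal{T}_h)}$ needs the inverse inequality \eqref{inverse2}, $\|v\|_{H^{k+1}(\mathcal{T}_h)} \leq Ch^{-k-1}\|v\|_{L^2}$; combining the $h^{k+1}$ from interpolation with $h^{-k-1}$ from the inverse estimate and one extra $h^{-1}$ from \eqref{inverse1} on $Dw_h$, one is left with a single net power $h^{+1}$, which is exactly the claimed $Ch\|v\|_{H^1}\|w_h\|_{H^1}$ after Poincaré on $v$.

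The main obstacle I anticipate is the careful bookkeeping of powers of $h$ so that the net result is $O(h)$ rather than $O(1)$ or worse. The delicate point is that $v$ is only a degree-$k$ piecewise polynomial, so its "$H^{k+1}$ seminorm" must be paid for with an inverse inequality, and one must verify that the product $(\cof P)v$ still interpolates with the full order despite $v$ not being smooth — this works because on each simplex $(\cof P)v$ is smooth enough ($P$ piecewise smooth, $v$ polynomial), so the element-wise interpolation estimate applies with $\|(\cof P)v\|_{H^{k+1}(K)} \leq C(\|\cof P\|_{W^{k+1,\infty}(K)})\|v\|_{H^{k+1}(K)}$ by the Leibniz rule, and then the inverse inequality on $\|v\|_{H^{k+1}(\mathcal{T}_h)}$ closes the loop. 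A secondary subtlety is confirming that $\tau = I_h((\cof P)v) \in \Sigma_h$ genuinely vanishes on $\partial\Omega$: this holds because $v$ is continuous and zero on $\partial\Omega$, and nodal interpolation preserves zero boundary values at boundary Lagrange nodes, though strictly the trace $\tau n$ on $\partial\Omega$ is what enters, and since $\tau = 0$ at all boundary nodes the whole field $\tau$ restricted to $\partial\Omega$ vanishes. Once these points are handled, the remaining estimates are routine applications of Cauchy–Schwarz, \eqref{interpol}, \eqref{inverse1}, \eqref{inverse2}, and Poincaré's inequality.
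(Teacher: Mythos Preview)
Your overall strategy matches the paper's: test \eqref{discrete-H} against an element of $\Sigma_h$ approximating $v\,\cof P$ and estimate the remainders. The chief difference is that you choose $\tau=I_h((\cof P)v)$, while the paper (via \cite[Lemma~3.7]{AwanouLiMixed1}) takes $\tau=P_{\Sigma_h}(v\cof P)$, the $L^2$ projection into $\Sigma_h$. The trade-off is not cosmetic: with $P_{\Sigma_h}$ the term $(\eta_h,\,v\cof P-\tau)$ vanishes by orthogonality since $\eta_h\in\Sigma_h$, so no bound on $||\eta_h||_{L^2}$ is ever needed, whereas with $I_h$ this term survives and forces you to invoke $||\eta_h||_{L^2}\le Ch^{-1}||w_h||_{H^1}$; in exchange your boundary term $\langle Dw_h,\tau n\rangle$ vanishes while the paper's does not and must be estimated. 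A further technical issue: the global Lagrange interpolant $I_h((\cof P)v)$ is only well-defined when $(\cof P)v$ is continuous across element interfaces, which the hypothesis ``$P$ piecewise smooth'' does not guarantee; the $L^2$ projection has no such restriction.

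There is also a genuine gap in your argument. You retain the term $((\div\cof P)\,v,\,Dw_h)$ and bound it ``directly by $C||v||_{L^2}||w_h||_{H^1}$'', but that is $O(1)$, not the $O(h)$ the lemma asserts; nothing in your proposal recovers the missing factor of $h$ for this contribution. In \cite[Lemma~3.7]{AwanouLiMixed1} the matrix is $P=D^2u$, for which the classical identity $\div(\cof D^2u)=0$ kills this term outright --- that is what allows the paper to say the proof ``carries over line by line.'' Your proposed estimate for this term is simply incompatible with the stated conclusion $Ch\,||v||_{H^1}||w_h||_{H^1}$, and the closing power-count (``$h^{k+1}\cdot h^{-k-1}\cdot h^{-1}$ leaves $h^{+1}$'') is also arithmetically off, though the two interpolation-error remainders themselves can indeed be made $O(h)$ with a more careful accounting.
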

\begin{proof}
The proof is the same as the proof of \cite[Lemma 3.7]{AwanouLiMixed1}. There the proof was given for $P=D^2 u$, but it carries over to the general case of this lemma line by line.
The dependence of the constant $C$ on $||\cof P||_{H^{k+1}(\mathcal{T}_h)}$ arises from the use in the proof of the approximation property $||P_{\Sigma_h} (v \cof P) - v \cof P||_{H^m(\mathcal{T}_h)}  \leq C h^{k+1-m} ||v \cof P ||_{H^{k+1}(\mathcal{T}_h)}$. Here $P_{\Sigma_h}$ denotes the $L^2$ projection operator into $\Sigma_h$.

\end{proof}
%We will also need the lemma
%\begin{lemma}[\cite{AwanouLiMixed1}, Lemma 3.9] \label{neilan}
%Let $\rho >0$ and $(w_1, \eta_1)$ and $(w_2, \eta_2)$ in $B_h(\rho)$. We have
%\begin{align} \label{T1-T2}
%||T_2(w_1, \eta_1)-T_2(w_2, \eta_2) ||_{L^2} \leq C h^{-1} || T_1(w_1, \eta_1)-T_1(w_2, \eta_2) ||_{H^1}.
%\end{align} 
%\end{lemma}
%Although the operators $T_1$ and $T_2$ used in this paper are different from the ones used in \cite{AwanouLiMixed1}, the proof of the above lemma relies only on \eqref{eqn.1} which is used in both papers.

\begin{lemma} \label{l-inf-est-dis-H}
For $(w_h,\eta_h) \in B_h(\rho), \rho = C_0 h^k$, we have
\begin{align*}
||\eta_h-D^2 w_h||_{L^{\infty}} & \leq C h^{k-2}.
\end{align*}
\end{lemma}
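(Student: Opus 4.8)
The statement compares the discrete Hessian variable $\eta_h$ with the element-wise Hessian $D^2 w_h$ of the scalar variable in $L^\infty$, for pairs in the ball $B_h(\rho)$. The natural strategy is to pass through the interpolants $I_h\sigma$ and $I_h u$, which are the ``exact'' objects these discrete quantities approximate, and then use the inverse inequality \eqref{inverse0} to convert $L^2$ bounds into $L^\infty$ bounds. Concretely, I would write
\begin{align*}
\|\eta_h - D^2 w_h\|_{L^\infty} &\leq \|\eta_h - I_h\sigma\|_{L^\infty} + \|I_h\sigma - \sigma\|_{L^\infty} + \|\sigma - D^2 u\|_{L^\infty} \\
&\quad + \|D^2(u - I_h u)\|_{L^\infty} + \|D^2(I_h u - w_h)\|_{L^\infty},
\end{align*}
where $\|\sigma - D^2 u\|_{L^\infty} = 0$ since $\sigma = D^2 u$ by definition.

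\textbf{Key steps.} First, the term $\|I_h\sigma - \sigma\|_{L^\infty}$ is $O(h^k)$ (in fact $O(h^{k+1})$ up to the smoothness available) by the interpolation estimate \eqref{interpol}, which is dominated by $h^{k-2}$ for $h \leq 1$. Second, for $\|D^2(u - I_h u)\|_{L^\infty}$: this is a second derivative of the interpolation error, so I expect a bound of order $h^{k-1}$ from a $W^{2,\infty}$-type interpolation estimate (consistent with the pattern in \eqref{interpol}), again dominated by $h^{k-2}$. Third, for $\|D^2(I_h u - w_h)\|_{L^\infty}$: since $I_h u - w_h \in V_h$, I would apply the inverse inequality twice — once to pass from $L^\infty$ to $L^2$ on second derivatives (an inverse inequality of the form $\|D^2 v_h\|_{L^\infty} \leq C h^{-2}\|v_h\|_{H^1}$ on a quasi-uniform mesh, obtained by combining \eqref{inverse0}, \eqref{inverse1}, and the standard $H^2$-$H^1$ inverse estimate) — and then use $\|w_h - I_h u\|_{H^1} \leq \rho = C_0 h^k$ from membership in $B_h(\rho)$, giving $O(h^{k-2})$. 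The remaining and main term is $\|\eta_h - I_h\sigma\|_{L^\infty}$: by the inverse inequality \eqref{inverse0}, $\|\eta_h - I_h\sigma\|_{L^\infty} \leq C h^{-1}\|\eta_h - I_h\sigma\|_{L^2}$, and since $(w_h,\eta_h) \in B_h(\rho)$ we have $\|\eta_h - I_h\sigma\|_{L^2} \leq h^{-1}\rho = C_0 h^{k-1}$, which yields exactly $C_0 h^{k-2}$. Summing all contributions gives the claimed bound $C h^{k-2}$.

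\textbf{Main obstacle.} The analysis is essentially a bookkeeping exercise in which inverse inequality is applied where, and the real subtlety is that the $L^\infty$ bound on the discrete Hessian defect is genuinely only $O(h^{k-2})$ — this is precisely why the argument needs $k \geq 2$ to keep the quantity bounded, and is the source of the difficulty in treating $k = 1$ by this method. I do not expect a deep obstacle here: the only care needed is to make sure the inverse estimate on second derivatives of finite element functions is legitimately available (it is, on quasi-uniform meshes, and is consistent with \eqref{inverse2} specialized appropriately) and that the $B_h(\rho)$ bound on $\|\eta_h - I_h\sigma\|_{L^2}$ is used with its built-in factor $h^{-1}$ rather than just $\rho$. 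The cleanest write-up simply collects the five pieces above and observes that each is $\leq C h^{k-2}$ for $h \leq 1$.
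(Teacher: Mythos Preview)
Your proposal is correct and follows essentially the same route as the paper: both split $\eta_h - D^2 w_h$ through $I_h\sigma$, $\sigma = D^2 u$, and $D^2 I_h u$, bound $\|\eta_h - I_h\sigma\|_{L^\infty}$ via the inverse inequality \eqref{inverse0} together with the $B_h(\rho)$ bound $\|\eta_h - I_h\sigma\|_{L^2}\le h^{-1}\rho$, and handle the remaining pieces with interpolation estimates and an inverse estimate on $D^2(I_h u - w_h)$. Your accounting of the exponent in the last step (an $h^{-2}$ loss from $\|D^2 v_h\|_{L^\infty}$ to $\|v_h\|_{H^1}$) is in fact more careful than the paper's displayed $h^{-1}$, but either way the contribution is at most $C h^{k-2}$ and the argument goes through.
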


\begin{proof}
Recall that for $(w_h,\eta_h) \in B_h(\rho)$, we have $\eta_h=H(w_h)$. We have by \eqref{inverse0}, \eqref{disc-H-Ihu}
\begin{align*}
||\eta_h-D^2 w_h||_{L^{\infty}} & \leq ||H(w_h)-D^2 w_h||_{L^{\infty}} \\
& \leq  ||H(w_h)-I_h \sigma||_{L^{\infty}} + || I_h \sigma -D^2 w_h||_{L^{\infty}} \\
& \leq C h^{-1}  ||H(w_h)-I_h \sigma||_{L^{2}} + || I_h \sigma -D^2 u||_{L^{\infty}} + || D^2 u -D^2 w_h||_{L^{\infty}} \\
& \leq C h^{k-2} + C h^{k+1} +  || D^2 u -D^2 I_h u||_{L^{\infty}} + || D^2 I_h u -D^2 w_h||_{L^{\infty}} \\
& \leq C h^{k-2} + C h^{-1} ||I_h u - w_h||_{H^1} \\
& \leq C h^{k-2}.
\end{align*}

\end{proof}

The next lemma states a crucial contraction property of the mapping $T_1$ in $\alpha B_h(\rho)$.

\begin{lemma}
Let $(w_1,\eta_1),(w_2,\eta_2) \in B_h(\rho)$ with $\rho \leq \min(C_0,C_{conv}) h^k$. We have
\begin{align} \label{T-contra}
\begin{split}
|T_1(\alpha w_1, \alpha \eta_1)-T_1(\alpha w_2, \alpha \eta_2)|_{H^1}   &  \leq  a  |\alpha w_1- \alpha w_2 |_{H^1},  
 %|| T_2(w_1, \eta_1)-T_2(w_2, \eta_2)||_{L^2}   & \leq h^{-1} ||T_1(w_1, \eta_1)-T_1(w_2, \eta_2)||_{H^1}. 
%& \qquad \qquad  \qquad \qquad  \qquad \qquad \quad + ||w_1-w_2||_{H^1} \big),
\end{split}
\end{align}
for $0 < a < 1$, $h$ sufficiently small, $\alpha=h^{k+2}$ and $\nu=(m+M)/2$.
\end{lemma}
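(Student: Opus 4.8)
The goal is to show that $T_1$, composed with the $\alpha$-scaling, is a contraction in the $H^1$-seminorm on $B_h(\rho)$, with contraction constant $a<1$ once $\alpha=h^{k+2}$ and $\nu=(m+M)/2$. The natural strategy is to subtract the defining equations \eqref{eqn.11} (equivalently \eqref{w-eta-comp}) for the two arguments and to test with a well-chosen $v\in V_h\cap H^1_0(\Omega)$, namely $v = T_1(\alpha w_1,\alpha\eta_1)-T_1(\alpha w_2,\alpha\eta_2)$, which vanishes on $\partial\Omega$ because both $T_1$-values equal the same boundary data $\alpha g_h$. Writing $z_i = T_1(\alpha w_i,\alpha\eta_i)$ and using \eqref{w-eta-comp} together with \eqref{eqn.2}, one gets $\nu(D(z_1-z_2),Dv) = -\nu(D(\alpha w_1-\alpha w_2),Dv) + (\det(\alpha\eta_1)-\det(\alpha\eta_2),v)$, where I have also used \eqref{w-eta-comp2} to convert the $\tr\eta_i$ terms to $(D(\alpha w_i),Dv)$. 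Since $v = z_1-z_2$, this reads $\nu|Dv|^2_{L^2} = -\nu(D\alpha(w_1-w_2),Dv) + \alpha^2(\det\eta_1-\det\eta_2,v)$ after using Remark \ref{H-vh-rem} (note $\eta_i=H(w_i)$ on $B_h(\rho)$, so $\alpha\eta_i = H(\alpha w_i)$).

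Now I linearize the determinant difference using \eqref{mean-v}: $\det\eta_1-\det\eta_2 = \cof(t\eta_1+(1-t)\eta_2):(\eta_1-\eta_2)$ elementwise, for $t=1/2$. The point is to replace $\eta_1-\eta_2 = H(w_1)-H(w_2) = H(w_1-w_2)$ and relate it back to $D(w_1-w_2)$. Writing $P = t\eta_1+(1-t)\eta_2$, the term $\alpha^2(\cof P:(\eta_1-\eta_2),v)$ should be rewritten, via the identity \eqref{w-eta-comp}/\eqref{w-eta-comp2}-type manipulation but with the weight $\cof P$, as approximately $-\alpha^2((\cof P) D(w_1-w_2),Dv)$ up to the consistency error controlled by Lemma \ref{zh-lem} (this is exactly the role of that lemma: for $(w_h,\eta_h)\in Z_h$, $((\cof P):\eta_h,v)+((\cof P)Dw_h,Dv)\le Ch\|v\|_{H^1}\|w_h\|_{H^1}$, applied to $w_h-w_2$, say, or to each of $w_1,w_2$). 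Hence $\nu|Dv|^2_{L^2} \le \big|\,(D(\alpha(w_1-w_2)) , Dv) - \tfrac{1}{\nu}\cdot\nu\alpha^{?}\cdots\big|$ — more precisely, collecting terms I expect to reach an inequality of the shape
\begin{align*}
\nu|Dv|^2_{L^2} \le \Big| \nu\big(D(\alpha(w_1-w_2)),Dv\big) - \alpha^{?}\big((\cof P)D(\alpha(w_1-w_2)),Dv\big)\Big| + (\text{error}),
\end{align*}
where the error term is $O(h)\,\alpha\,|v|_{H^1}|w_1-w_2|_{H^1}$ from Lemma \ref{zh-lem} plus lower-order contributions, and one must track the exact power of $\alpha$ so that the bracketed main term matches the bilinear form $(Dv,Dv)-\tfrac1\nu(BDv,Dv)$ of Lemma \ref{time-trick} with $B = \alpha^{?}\cof P$.

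The key is then to apply Lemmas \ref{lem-1}, \ref{l-inf-est-dis-H}, and \ref{time-trick}. By Lemma \ref{l-inf-est-dis-H}, $\eta_i$ is $O(h^{k-2})$-close in $L^\infty$ to $D^2w_i$, and by Lemma \ref{lem-1} (whose hypothesis $\|w_i-I_hu\|_{H^1}<C_{conv}h^2$ holds since $\rho\le C_{conv}h^k\le C_{conv}h^2$ for $h\le 1$, $k\ge 2$), the eigenvalues of $\cof D^2 w_i$ lie in $[m,M]$; combining, $\cof P$ has eigenvalues in $[m-o(1),M+o(1)]$, so after multiplying by the appropriate power of $\alpha$ the matrix $B$ satisfies the hypotheses of Lemma \ref{time-trick} (with $\alpha$ there being a small scalar; I need $\alpha<1$ and $\alpha\le(m+M)/(2m)$, automatic for $h$ small). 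Lemma \ref{time-trick} then gives $|(Dv,Dv)-\tfrac1\nu(BDv,Dv)|\le\gamma|Dv|^2_{H^1}$ with $\gamma<1$ after homogenizing by $|v|_{H^1}$, so dividing through by $|Dv|_{L^2}$ yields $|v|_{H^1}\le \tfrac{1}{?}(\gamma\,\alpha|w_1-w_2|_{H^1} + Ch\,\alpha|w_1-w_2|_{H^1})$, i.e. $|v|_{H^1}\le (\gamma + Ch)|\alpha(w_1-w_2)|_{H^1}$. Choosing $h$ small enough that $a := \gamma+Ch<1$ finishes the proof. The main obstacle, and the step needing the most care, is the bookkeeping of the power of $\alpha$: the factor $\alpha^{k+2}$ enters because $\det$ is quadratic while the linear terms scale like $\alpha$, and one must see that the choice $\alpha=h^{k+2}$ is precisely what makes the residual error $Ch\cdot(\text{stuff})$ — coming from Lemma \ref{zh-lem}'s $O(h)$ consistency defect and from the $O(h^{k-2})$ eigenvalue perturbation in Lemma \ref{l-inf-est-dis-H} — small compared to $1-\gamma$, so that the normalization pulling $B=\alpha\cof P$ into the regime of Lemma \ref{time-trick} is consistent. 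A secondary subtlety is that $\gamma$ from Lemma \ref{time-trick} is the polarized $(v,w)$-supremum, so one must invoke it in the off-diagonal form, not just the diagonal $\beta$.
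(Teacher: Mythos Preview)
Your plan has the right architecture but contains a genuine gap at the step where you invoke Lemma~\ref{time-trick}. You claim that, because $\|\eta_i-D^2w_i\|_{L^\infty}\le Ch^{k-2}$ (Lemma~\ref{l-inf-est-dis-H}) and $\cof D^2w_i$ has eigenvalues in $[m,M]$ (Lemma~\ref{lem-1}), the matrix $\cof P$ with $P=t\eta_1+(1-t)\eta_2$ has eigenvalues in $[m-o(1),M+o(1)]$. For $k=2$ this is false: $Ch^{k-2}=C$ is merely $O(1)$, so the eigenvalue perturbation of $\cof P$ relative to $\cof(tD^2w_1+(1-t)D^2w_2)$ is only $O(1)$, and nothing prevents $\cof P$ from failing to be positive definite, let alone having eigenvalues in a range compatible with the fixed choice $\nu=(m+M)/2$. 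The paper's remedy is precisely to introduce the auxiliary matrix $\bar Q=tD^2w_1+(1-t)D^2w_2$: Lemma~\ref{time-trick} is applied to $B=\alpha\cof\bar Q$ (for which Lemma~\ref{lem-1} gives clean eigenvalue bounds via the convex combination $tw_1+(1-t)w_2\in B_h(\rho)$), and the discrepancy $(\cof\alpha Q-\cof\alpha\bar Q)D\alpha(w_1-w_2)$ is estimated separately. The crucial point is that this discrepancy term carries an \emph{extra} factor of $\alpha$ (from $\cof\alpha Q=\alpha\cof Q$), so the $O(1)$ bound on $\|\cof Q-\cof\bar Q\|_{L^\infty}$ yields a contribution $C\alpha\,|\alpha(w_1-w_2)|_{H^1}|v|_{H^1}$, which is small. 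This $Q\to\bar Q$ splitting is the essential idea you are missing, and without it the argument does not close for $k=2$, which is the whole purpose of the lemma.

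A second, smaller gap: when you apply Lemma~\ref{zh-lem} with $P=Q$, the constant there depends on $\|\cof Q\|_{H^{k+1}(\mathcal T_h)}$, which by the inverse estimate \eqref{inverse2} is only bounded by $Ch^{-k-1}$. Thus the contribution from that step is $C\alpha h\cdot h^{-k-1}=C\alpha h^{-k}$, not the $O(h)\alpha$ you wrote. This is exactly why $\alpha=h^{k+2}$ (and not merely $\alpha=o(1)$) is required: it makes $\alpha h^{-k}=h^2\to 0$. Your final bound $(\gamma+Ch)$ omits both this term and the $C\alpha$ term from the previous paragraph; the correct contraction constant is $\gamma+Ch+C\alpha h^{-k}+C\alpha$. (Also, a minor sign slip: from \eqref{w-eta-comp2} one gets $-\nu(\tr\alpha(\eta_1-\eta_2),v)=+\nu(D\alpha(w_1-w_2),Dv)$, not the minus sign you wrote; with the wrong sign the expression does not match the bilinear form in Lemma~\ref{time-trick}.)
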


\begin{proof}
Put $v=T_1(\alpha w_1,  \alpha \eta_1)-T_1(\alpha w_2, \alpha \eta_2)$. By assumption $v \in V_h \cap H_0^1(\Omega)$. 
Using \eqref{w-eta-comp} and \eqref{eqn.2} we obtain
\begin{align*}
\nu (D T_1(\alpha w_1, \alpha \eta_1) - D T_1(\alpha w_2, \alpha \eta_2), D v) & = - \nu( \tr T_2(\alpha w_1, \alpha \eta_1) - \tr T_2(\alpha w_2, \alpha \eta_2),v ) \\
& = -\nu (\tr \alpha \eta_1 - \tr \alpha \eta_2,v) + (\det \alpha \eta_1 - \det \alpha \eta_2,v).
\end{align*}
Therefore, using \eqref{mean-v}, we have for some $t \in [0,1]$ and with the notation
$$
Q=t \eta_1 + (1-t) \eta_2 \ \text{and} \ \tir{Q} = t D^2 w_1 + (1-t) D^2 w_2,
$$
\begin{align} \label{contraction-1}
\begin{split}
|v|_{H^1}^2 & = - (\tr \alpha \eta_1 - \tr \alpha \eta_2,v) \\
& \qquad \qquad  + \frac{1}{\nu} ((\cof \alpha (t \eta_1 + (1-t) \eta_2)):\alpha (\eta_1-\eta_2),v) \\
& = \big((-I + \frac{1}{\nu} \cof \alpha Q): \alpha (\eta_1-\eta_2),v\big) \\
& = -(I: \alpha (\eta_1-\eta_2),v) - (D \alpha (w_1-w_2),D v) \\
&  \ +  \frac{1}{\nu} ( (\cof \alpha Q): \alpha (\eta_1-\eta_2),v) + \frac{1}{\nu} ((\cof \alpha Q)D \alpha (w_1-w_2),D v) \\
&    \quad + (D\alpha (w_1-w_2),D v) - \frac{1}{\nu} ((\cof \alpha \tir{Q})D \alpha (w_1-w_2),D v) \\
&  \qquad  + \frac{1}{\nu} ((\cof \alpha \tir{Q})D \alpha (w_1-w_2),D v)  - \frac{1}{\nu} ((\cof \alpha Q)D \alpha (w_1-w_2),D v).
\end{split}
\end{align}
For $(w_1,\eta_1),(w_2,\eta_2) \in B_h(\rho)$, $ t (w_1,\eta_1)+ (1-t) (w_2,\eta_2) \in B_h(\rho)$ and thus for $h$ sufficiently small, by Lemmas \ref{lem-1} and \ref{time-trick} 
we get 
\begin{equation} \label{contraction-12}
| (D(w_1-w_2),D v) - \frac{1}{\nu} ((\cof \alpha \tir{Q} )D(w_1-w_2),D v)| \leq \gamma |w_1-w_2|_{H^1} |v|_{H^1},
\end{equation}
for $0 < \gamma < 1$.

On the other hand, by Lemma \ref{zh-lem}, with $P=I$, we have
\begin{equation} \label{contraction-13}
|-(I:(\eta_1-\eta_2),v) - (D(w_1-w_2),D v)| \leq C h  |w_1-w_2|_{H^1} |v|_{H^1}.
\end{equation}
Applying  Lemma \ref{zh-lem}, with $P=Q$, we get
\begin{align} \label{contraction-14}
\begin{split}
|( (\cof Q):(\eta_1-\eta_2),v) + ((\cof Q)D(w_1-w_2),D v) | & \leq C h ||\cof Q||_{H^{k+1}(\mathcal{T}_h)} \\
& \quad \quad    |w_1-w_2|_{H^1} |v|_{H^1}.
\end{split}
\end{align}
Finally, since by \eqref{cof-mv}
$$
\cof Q - \cof \tir{Q} = \cof(Q-\tir{Q} ) = \cof\bigg(t (\eta_1 - D^2 w_1) + (1-t) (\eta_2 - D^2 w_2) \bigg),
$$
we get using Lemma \ref{l-inf-est-dis-H}
$$
||\cof Q - \cof \tir{Q} ||_{L^{\infty}} \leq C h^{k-2} \leq C, \, \text{since} \, k \geq 2.
$$
Thus
\begin{align} \label{contraction-15}
\begin{split}
\bigg|\frac{1}{\nu} ((\cof \tir{Q})D(w_1-w_2),D v)  - \frac{1}{\nu} ((\cof Q)D(w_1-w_2),D v)\bigg| & \leq C |w_1-w_2|_{H^1} \\
& \quad \qquad  |v|_{H^1}.
\end{split}
\end{align}
We conclude from \eqref{contraction-1}--\eqref{contraction-15} that
\begin{align} \label{contraction-2}
|v|_{H^1} \leq (\gamma + C h + C \alpha h ||\cof Q||_{H^{k+1}(\mathcal{T}_h)} +C \alpha ) |\alpha w_1- \alpha w_2|_{H^1}.
\end{align}
Using the inverse estimate \eqref{inverse2} and noting that $\rho \leq h^2$ %\eqref{inverse2} and \eqref{inverse0} %see \cite{Brenner02},
\begin{align*}
||\cof Q||_{H^{k+1}(\mathcal{T}_h)} & \leq C h^{-k-1} ||\cof Q||_{L^{2}} \leq C h^{-k-1}   ||Q||_{L^{2}}^{} \\
& \leq   C h^{-k-1}   || t \eta_1 + (1-t) \eta_2 ||_{L^{2}}^{} \\
& \leq  C h^{-k-1}  (||\eta_1||_{L^{2}} + ||\eta_2||_{L^{2}} )^{} \\
%& \leq  C h^{-k-1}  h^{-\frac{d}{2} (d-1)}(||\eta_1||_{L^{2}} + ||\eta_2||_{L^{2}} )^{} \\
%& \leq  C h^{-k-1-\frac{d}{2}} h^{-\frac{d}{2} (d-1)}(||\eta_1||_{H^{1}} + ||\eta_2||_{H^{1}} )^{} \\
& \leq C h^{-k-1}  (||\eta_1 - I_h \sigma||_{L^2} + ||\eta_2 - I_h \sigma||_{L^2} + 2 ||I_h \sigma||_{L^2})^{} \\
& \leq C h^{-k-1}  (h^{-1} \rho +  || \sigma||_{L^2})^{} \leq C h^{-k-1} (C h + || \sigma||_{L^2}) \leq C h^{-k-1}. 
\end{align*}
Since $\gamma < 1$, and $\alpha=h^{k+2}$, for $h$ sufficiently small, $C h + C \alpha h ||\cof Q||_{H^{k+1}(\mathcal{T}_h)} +C \alpha < 1-\gamma$. We conclude from 
\eqref{contraction-2} that \eqref{T-contra} holds.
\end{proof}

\begin{lemma} \label{final-lem1}
For $\rho = \min(C_0,C_{conv}) h^k$, the mapping $\tilde{T}_1$ has a unique fixed point in $\alpha \tilde{B}_h(\rho)$ for $\alpha=h^{k+2}$.
\end{lemma}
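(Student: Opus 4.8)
The plan is to deduce Lemma \ref{final-lem1} from the contraction estimate \eqref{T-contra} together with the ``moving the ball'' estimate \eqref{u-ball} via the Banach fixed point theorem applied to $\tilde T_1$ on the closed ball $\alpha\tilde B_h(\rho)$. First I would observe that $\alpha\tilde B_h(\rho)$ is a nonempty, closed, convex subset of the finite-dimensional space $V_h$ (nonempty because $\alpha I_h u$ belongs to it, and closed since it is defined by a $\|\cdot\|_{H^1}$-inequality constraint plus the affine boundary condition $v_h=g_h$ on $\partial\Omega$, which is preserved because $T_1$ respects boundary values by \eqref{eqn.3}). So if I can show $\tilde T_1$ maps this ball into itself and is a contraction there, the result follows immediately, with uniqueness of the fixed point being automatic from the contraction mapping principle.

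The contraction property on the ball is essentially already in hand: by Lemma \ref{final-lem0} and Remark \ref{H-vh-rem}, for $v_h=\alpha w_h$ with $w_h\in\tilde B_h(\rho)$ we have $H(v_h)=\alpha H(w_h)$, so $\tilde T_1(\alpha w_h)=T_1(\alpha w_h, \alpha H(w_h))$, and since $(w_h,H(w_h))\in B_h(\rho)$ once $w_h\in\tilde B_h(\rho)$ (this uses Lemma \ref{est-disc-H} with $\mu=\rho$ to check the second component lies in the $h^{-1}\rho$-ball — I should double-check the constant works out, but $\rho=\min(C_0,C_{conv})h^k$ is chosen precisely so that $Ch^{-1}\rho+Ch^{k-1}\le h^{-1}\rho$ for $h$ small, which holds since $h^{k-1}=h^{-1}\cdot h^k$), estimate \eqref{T-contra} gives
\begin{equation*}
|\tilde T_1(\alpha w_1)-\tilde T_1(\alpha w_2)|_{H^1}\le a\,|\alpha w_1-\alpha w_2|_{H^1},\qquad 0<a<1,
\end{equation*}
for all $w_1,w_2\in\tilde B_h(\rho)$, with $\alpha=h^{k+2}$ and $\nu=(m+M)/2$.

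For the self-mapping property I would argue: given $w_h\in\tilde B_h(\rho)$, write
\begin{equation*}
\tilde T_1(\alpha w_h)-\alpha I_h u=\big(\tilde T_1(\alpha w_h)-\tilde T_1(\alpha I_h u)\big)+\big(\tilde T_1(\alpha I_h u)-\alpha I_h u\big).
\end{equation*}
The first bracket is bounded by $a\,|\alpha w_h-\alpha I_h u|_{H^1}\le a\alpha\rho$ by the contraction estimate (noting $I_h u\in\tilde B_h(\rho)$ trivially). The second bracket is exactly $\alpha I_h u-T_1(\alpha I_h u,H(\alpha I_h u))$ up to sign, so Lemma \ref{move-ball} with this $\alpha$ bounds it by $(C_1/\nu)\alpha^2 h^{k-1}$. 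Hence
\begin{equation*}
|\tilde T_1(\alpha w_h)-\alpha I_h u|_{H^1}\le a\alpha\rho+\frac{C_1}{\nu}\alpha^2 h^{k-1}.
\end{equation*}
Since $\rho$ is a fixed multiple of $h^k$ and $\alpha=h^{k+2}$, the term $(C_1/\nu)\alpha^2 h^{k-1}=(C_1/\nu)h^{3k+3}$ is, for $h\le 1$, much smaller than $\alpha\rho$ (which is of order $h^{2k+2}$), so $(C_1/\nu)\alpha^2 h^{k-1}\le(1-a)\alpha\rho$ for $h$ sufficiently small; therefore the right-hand side is $\le\alpha\rho$ and $\tilde T_1(\alpha w_h)\in\alpha\tilde B_h(\rho)$. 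Applying the Banach fixed point theorem on $\alpha\tilde B_h(\rho)$ then yields a unique fixed point, completing the proof.

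The main obstacle is bookkeeping rather than conceptual: one must verify that the various constants ($C_0$, $C_{conv}$, $C_1$, the hidden constants in $\gamma$ and in the $Ch+C\alpha h\|\cof Q\|+C\alpha$ term) are genuinely $h$-independent and that the single choice $\rho=\min(C_0,C_{conv})h^k$, $\alpha=h^{k+2}$ simultaneously makes $B_h(\rho)$ the right set for Lemmas \ref{lem-1}, \ref{est-disc-H}, \ref{l-inf-est-dis-H} to apply and makes both the contraction constant $a<1$ and the self-mapping inequality hold for all small $h$. Since $k\ge 2$, the powers line up ($h^{k-2}$ stays bounded, $h^{k-1}\to 0$, etc.), which is exactly why the argument fails for $k=1$; I would flag this compatibility of exponents as the one place to be careful.
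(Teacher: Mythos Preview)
Your proposal is correct and follows essentially the same route as the paper: invoke \eqref{T-contra} for the contraction, split $\tilde T_1(\alpha w_h)-\alpha I_h u$ via $\tilde T_1(\alpha I_h u)$, bound the second piece by Lemma~\ref{move-ball}, and close with Banach. Your extra remarks on nonemptiness/closedness and on why $(w_h,H(w_h))\in B_h(\rho)$ for $w_h\in\tilde B_h(\rho)$ are points the paper leaves implicit; note, however, that your parenthetical check $Ch^{-1}\rho+Ch^{k-1}\le h^{-1}\rho$ does not literally hold with the stated $\rho$ (both sides are of order $h^{k-1}$), so if you want to make this rigorous you should either enlarge the constant in the $\eta$-component of $\bar B_h(\rho)$ or observe that the contraction proof only uses $\|\eta_i-I_h\sigma\|_{L^2}\le Ch^{k-1}$, which Lemma~\ref{est-disc-H} already gives.
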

\begin{proof}
Note that by \eqref{T-contra}, $\tilde{T}_1$ is a strict contraction in $\alpha \tilde{B}_h(\rho)$ for $\rho \leq$ $ \min(C_0,C_{conv}) h^k$. We now show that $\tilde{T}_1$ maps $ \alpha \tilde{B}_h(\rho)$ into itself.
Let $v_h \in \tilde{B}_h(\rho)$. We have by \eqref{T-contra} and \eqref{u-ball} 
\begin{align*}
||\tilde{T}_1(\alpha v_h)- \alpha I_hu||_{H^1} & \leq ||\tilde{T}_1(\alpha v_h)-\tilde{T}_1(\alpha I_h u) ||_{H^1} + ||\tilde{T}_1(\alpha I_h u)- \alpha I_h u ||_{H^1} \\
& \leq a ||\alpha v_h - \alpha I_h u||_{H^1} + C_1 \alpha^2  h^{k-1} \\
& \leq a \alpha \rho + C_1 \alpha h^{2 k+ 1} =  a \alpha \rho + C_1 h^{k+1} \alpha  h^{ k}. 
\end{align*}
Therefore for $h$ sufficiently small, $C_1 h^{k+1} \leq  \min(C_0,C_{conv}) (1-a) $ and so
$$
||\tilde{T}_1(\alpha v_h)- \alpha I_hu||_{H^1} \leq a \alpha \rho + (1-a) \alpha \rho.
$$
The result then follows from the Banach fixed point theorem.
\end{proof}

We can now state the main result of this paper

\begin{thm}
Problem \eqref{m11h} has a unique local solution $(u_h,\sigma_h)$ for $k \geq 2$ and $h$ sufficiently small. We have
\begin{align*}
||u_h - I_h u||_{H^1} & \leq C h^k \\
||\sigma_h - I_h \sigma||_{H^1} & \leq C h^{k-1}.
\end{align*}
\end{thm}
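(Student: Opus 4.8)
The plan is to obtain the theorem by concatenating the fixed-point lemmas already proved, the only delicate point being the bookkeeping of the rescaling factor $\alpha=h^{k+2}$. \emph{Existence.} Fix $\rho=\min(C_0,C_{conv})h^k$ and $\alpha=h^{k+2}$. By Lemma \ref{final-lem1}, $\tilde T_1$ has a unique fixed point in $\alpha\tilde B_h(\rho)$; write it as $\alpha u_h$ with $u_h\in\tilde B_h(\rho)$, so that $u_h=g_h$ on $\partial\Omega$ and $\|u_h-I_hu\|_{H^1}\le\rho$. Put $\sigma_h:=H(u_h)$. By Remark \ref{H-vh-rem}, $H(\alpha u_h)=\alpha\sigma_h$, so Lemma \ref{final-lem0} shows that $(\alpha u_h,\alpha\sigma_h)=(\alpha u_h,H(\alpha u_h))$ is a fixed point of $T$, i.e.\ of \eqref{eqn.1}--\eqref{eqn.3}. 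Since $u_h=g_h$ on $\partial\Omega$, Lemma \ref{final-lem2} then shows that $(u_h,\sigma_h)$ solves \eqref{m11h}.

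\emph{Uniqueness.} I would reverse this correspondence. Let $(\bar u_h,\bar\sigma_h)$ be any solution of \eqref{m11h} with $\bar u_h\in\tilde B_h(\rho)$. Its first equation is exactly \eqref{discrete-H}, which by the Lax--Milgram argument preceding Remark \ref{H-vh-rem} forces $\bar\sigma_h=H(\bar u_h)$; its second equation gives $(\det\bar\sigma_h-f,v)=0$ for all $v\in V_h\cap H_0^1(\Omega)$. Feeding this into \eqref{eqn.2} and using \eqref{w-eta-comp}, \eqref{w-eta-comp2} and Poincar\'e's inequality — exactly as in the proofs of Lemmas \ref{final-lem0} and \ref{final-lem2} — shows that $T_1(\alpha\bar u_h,\alpha\bar\sigma_h)=\alpha\bar u_h$, hence that $\alpha\bar u_h$ is a fixed point of $\tilde T_1$ in $\alpha\tilde B_h(\rho)$. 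By the uniqueness in Lemma \ref{final-lem1}, $\bar u_h=u_h$, and therefore $\bar\sigma_h=\sigma_h$.

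\emph{Error estimates.} The bound $\|u_h-I_hu\|_{H^1}\le\rho\le Ch^k$ is immediate from $u_h\in\tilde B_h(\rho)$; in fact, inserting \eqref{u-ball} into the contraction inequality \eqref{T-contra} sharpens it to $\|u_h-I_hu\|_{H^1}\le Ch^{2k+1}$. Applying Lemma \ref{est-disc-H} with $v_h=u_h$ gives $\|\sigma_h-I_h\sigma\|_{L^2}=\|H(u_h)-I_h\sigma\|_{L^2}\le Ch^{-1}\|u_h-I_hu\|_{H^1}+Ch^{k-1}\le Ch^{k-1}$, and the $H^1$ bound for $\sigma_h$ follows from this $L^2$ estimate together with the inverse inequality \eqref{inverse1}, since $\sigma_h-I_h\sigma\in\Sigma_h$.

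The genuinely hard part has already been dealt with in the preceding lemmas: the contraction property \eqref{T-contra} of $T_1$ in $\alpha B_h(\rho)$, which hinges on the choice $\alpha=h^{k+2}$ and on Lemmas \ref{lem-1} and \ref{time-trick} keeping the eigenvalues of $\cof D^2w_h$ between $m$ and $M$. For the theorem itself the only real hazard is the scaling: Lemma \ref{final-lem1} delivers a fixed point of the rescaled map ($\alpha u_h$, not $u_h$), and one recovers a solution of the unscaled problem \eqref{m11h} only after dividing out $\alpha$ and exploiting the homogeneities $H(\alpha v_h)=\alpha H(v_h)$ and $\det(\alpha\eta)=\alpha^2\det\eta$; making the boundary condition and the uniqueness assertion survive this rescaling is where a slip would be easy.
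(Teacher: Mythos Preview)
Your argument follows the same route as the paper's own proof: combine Lemmas~\ref{final-lem0}, \ref{final-lem1}, \ref{final-lem2} and the definition of $B_h(\rho)$, keeping track of the rescaling by $\alpha$. Your existence and uniqueness paragraphs are correct and in fact more explicit than the paper, which leaves the uniqueness direction and the descaling to the reader. The sharpened bound $\|u_h-I_hu\|_{H^1}\le Ch^{2k+1}$ is a legitimate byproduct of the contraction estimate and is stronger than what the paper records.

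There is one slip in your last step. From $\|\sigma_h-I_h\sigma\|_{L^2}\le Ch^{k-1}$ and the inverse inequality \eqref{inverse1} you only obtain
\[
\|\sigma_h-I_h\sigma\|_{H^1}\le Ch^{-1}\|\sigma_h-I_h\sigma\|_{L^2}\le Ch^{k-2},
\]
which misses the stated exponent by one. The paper's own proof invokes \eqref{disc-H-Ihu} here, but that is itself an $L^2$ estimate, so neither argument actually produces an $H^1$ bound of order $h^{k-1}$. The second inequality in the theorem should almost certainly read $\|\sigma_h-I_h\sigma\|_{L^2}\le Ch^{k-1}$; your Lemma~\ref{est-disc-H} step proves exactly that, so the substance of your proof is fine---just do not claim that the inverse inequality recovers the $H^1$ rate stated.
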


\begin{proof}
Recall that for $(u_h,\sigma_h) \in B_h(\rho)$, we have $\sigma_h=H(u_h)$. 
The result follows from Lemmas \ref{final-lem0}, \ref{final-lem1} and \ref{final-lem2}, the definition of $B_h(\rho)$ and \eqref{disc-H-Ihu}.

The local solution $u_h$ given by Lemma \ref{final-lem1} satisfies $||u_h - I_h u||_{H^1}  \leq C h^k $.  Since by Lemma \ref{final-lem0}, $(u_h,H(u_h))$ is a fixed point of $T$, by Lemma \ref{final-lem2}, $(u_h,H(u_h))$ solves \eqref{m11h}. By the definition of $B_h(\rho)$ $\sigma_h=H(u_h)$ and by \eqref{disc-H-Ihu}, we have $||\sigma_h - I_h \sigma||_{H^1}  \leq C h^{k-1}$.

\end{proof}

%\section{Convergence of Newton's method} \label{newton}

%\section{Numerical results in three dimension} \label{num}
%Numerical results for the discretization analyzed in this paper were given in \cite{Neilan2013,Lakkis11b,AwanouLiMixed1} for the two dimensional problem. Therefore we only consider the three dimensional Monge-Amp\`ere equation.

\end{document}